\newtheorem{theorem}{Theorem}
\newtheorem{corollary}[theorem]{Corollary}
\newtheorem{lemma}[theorem]{Lemma}
\numberwithin{theorem}{section}
\title{On the deficiency of complete multipartite graphs}
\author{A.R. Davtyan\address[MCSD]{Department of Informatics and Applied Mathematics,\\
Yerevan State University, 0025, Armenia}%
\thanks{email: armen2davtyan@gmail.com},
        G.M. Minasyan\addressmark[MCSD]%
\thanks{email: gevor.minasyan94@gmail.com},
        P.A. Petrosyan\addressmark[MCSD]%
\address{Department of Applied Mathematics and Informatics,\\
Russian-Armenian University, 0051, Armenia}%
\thanks{email: petros\_petrosyan@ysu.am}}
\begin{document}

\maketitle

\begin{abstract}
An edge-coloring of a graph $G$ with colors $1,\ldots,t$ is an \emph{interval $t$-coloring} if all colors are used, and the colors of edges incident to each vertex of $G$ are distinct and form an integer interval. It is well-known that there are graphs that do not have interval colorings. The \emph{deficiency} of a graph $G$, denoted by $\mathrm{def}(G)$, is the minimum number of pendant edges whose attachment to $G$ leads to a graph admitting an interval coloring. In this paper we investigate the problem of determining or bounding of the deficiency of complete multipartite graphs. In particular, we obtain a tight upper bound for the deficiency of complete multipartite graphs. We also determine or bound the deficiency for some classes of complete multipartite graphs.\\

Keywords: proper edge-coloring, interval (consecutive) coloring,
deficiency, complete multipartite graph.
\end{abstract}

\section{Introduction}\

All graphs considered in this paper are finite, undirected, and have no loops or multiple edges. Let $V(G)$ and $E(G)$ denote the sets of vertices and edges of $G$, respectively. If $S\subseteq V(G)$, then $G[S]$ denotes the subgraph of $G$ induced by $S$. The degree of a vertex $v\in V(G)$ is denoted by $d_{G}(v)$, the maximum degree of vertices in $G$ by $\Delta(G)$, and the chromatic index of $G$ by $\chi^{\prime}(G)$. The terms and concepts that we do not define can be found in \cite{AsrDenHag,Kubale,West}.

A \emph{proper edge-coloring} of a graph $G$ is a mapping
$\alpha: E(G)\rightarrow \mathbb{N}$ such that  $\alpha(e)\neq \alpha(e^{\prime})$ for every pair of adjacent edges $e,e^{\prime}\in E(G)$. If $\alpha $ is a proper edge-coloring of a graph $G$ and $v\in V(G)$, then the \emph{spectrum of a vertex $v$}, denoted by $S_{G}\left(v,\alpha \right)$ (or $S\left(v,\alpha \right)$), is the set of all colors appearing on edges incident to $v$. A proper edge-coloring $\alpha$ of a graph $G$ with colors $1,\ldots,t$ is an \emph{interval $t$-coloring} if for each vertex $v$ of $G$, the set $S\left(v,\alpha \right)$ is an interval of integers. A graph $G$ is \emph{interval colorable} if it has an interval $t$-coloring for some positive integer $t$. The set of all interval colorable graphs is denoted by $\mathfrak{N}$. The concept of interval edge-coloring of graphs was introduced by Asratian and Kamalian \cite{AsrKam} in 1987. In \cite{AsrKam}, the authors noted that if $G\in \mathfrak{N}$, then $\chi^{\prime}\left(G\right)=\Delta(G)$.
Asratian and Kamalian also proved \cite{AsrKam,AsrKamJCTB} that if a triangle-free graph $G$ admits an interval $t$-coloring, then $t\leq \left\vert V(G)\right\vert -1$. Generally, it is an $NP$-complete problem to determine whether a bipartite graph has an interval coloring \cite{Seva}. In \cite{Kampreprint,KamDiss},
Kamalian investigated interval colorings of complete bipartite
graphs and trees. In particular, he proved that the complete
bipartite graph $K_{m,n}$ has an interval $t$-coloring if and only
if $m+n-\gcd(m,n)\leq t\leq m+n-1$, where $\gcd(m,n)$ is the
greatest common divisor of $m$ and $n$. In \cite{PetDM,PetKhachTan},
Petrosyan, Khachatrian and Tananyan proved that the $n$-dimesional
cube $Q_{n}$ has an interval $t$-coloring if and only if $n\leq t\leq \frac{n\left(n+1\right)}{2}$. There are many papers devoted to this topic, in particular, surveys on the topic can be found in some books \cite{AsrDenHag,Kubale}.\

There are graphs that have no interval colorings; a smallest example is $K_{3}$. Since not all graphs admit an interval coloring, it is naturally to consider a measure of closeness for a graph to be interval colorable. In
\cite{GiaroKubaleMalaf1}, Giaro, Kubale and Ma\l afiejski introduced such a measure which is called deficiency of a graph. The \emph{deficiency $\mathrm{def}(G)$ of a graph $G$} is the minimum number of pendant edges whose attachment to $G$ makes it interval colorable.
The concept of deficiency of graphs can be also defined in terms of proper edge-colorings. The \emph{deficiency of a proper edge-coloring $\alpha$ at vertex $v\in V(G)$}, denoted by $\mathrm{def}(v,\alpha)$, is the minimum number of integers which must be added to $S\left(v,\alpha \right)$ to form an interval, and the \emph{deficiency $\mathrm{def}\left(G,\alpha\right)$ of a proper edge-coloring $\alpha$ of $G$} is defined as the sum $\sum_{v\in V(G)}\mathrm{def}(v,\alpha)$. So, $\mathrm{def}(G)=\min_{\alpha}\mathrm{def}\left(G,\alpha\right)$, where minimum is taken over all possible proper edge-colorings of $G$. Clearly,
$\mathrm{def}(G)=0$ if and only if $G\in\mathfrak{N}$. 
In general, the problem of determining the deficiency of a graph is
$NP$-complete, even for regular and bipartite graphs
\cite{AsrKam,Seva,GiaroKubaleMalaf1}. In \cite{GiaroKubaleMalaf1},
Giaro, Kubale and Ma\l afiejski obtained some results on the
deficiency of bipartite graphs. In particular, they showed that
there are bipartite graphs whose deficiency approaches the number of vertices. In \cite{GiaroKubaleMalaf2}, the same authors proved that if $G$ is an $r$-regular graph with an odd number of vertices, then $\mathrm{def}(G)\geq \frac{r}{2}$, and determined the deficiency of odd cycles, complete graphs, wheels and broken wheels. In \cite{Schwartz}, Schwartz studied the deficiency of regular graphs. In particular, he obtained tight bounds on the deficiency of regular graphs and proved that there are regular graphs with high deficiency. Bouchard, Hertz and Desaulniers \cite{BouchHertzDesau} derived some lower bounds on the deficiency of graphs and provided a tabu search algorithm for finding a proper edge-coloring with minimum deficiency of a graph. Borowiecka-Olszewska, Drgas-Burchardt and Ha\l uszczak \cite{B-OD-BHal} studied the deficiency of $k$-trees. In particular, they determined the deficiency of all $k$-trees with maximum degree at most $2k$, where
$k\in \{2,3,4\}$. They also proved that the following lower bound
on $\mathrm{def}(G)$ holds: if $G$ is a graph with an odd number of
vertices, then $\mathrm{def}(G)\geq \frac{2\vert E(G)\vert -(\vert V(G)\vert
-1)\Delta(G)}{2}$. Recently, Khachatrian \cite{KhachOuterplanar} proved that the following upper bound on $\mathrm{def}(G)$ for outerplanar graphs holds: if $G$ is an outerplanar graph, then $\mathrm{def}(G)\leq \frac{|V(G)|-2}{og(G)-2}$, where $og(G)$ is the odd girth of the graph.    

One of the less-investigated problems related to the deficiency of graphs is the problem of determining the deficiency of complete multipartite graphs. Several special cases of complete multipartite graphs were considered by some authors. In partitcular, Giaro, Kubale and Ma\l afiejski \cite{GiaroKubaleMalaf2} proved the following result for the complete graph $K_{n}$: $\mathrm{def}(K_{n})=0$ if $n$ is even, and $\mathrm{def}(K_{n})=\frac{n-1}{2}$ if $n$ is odd. Later, Petrosyan and Khachatrian \cite{PetrosHrant} proved that for near-complete graphs $\mathrm{def}(K_{2n+1}-e)=n-1$ (where $e$ is an edge of $K_{2n+1}$), thereby confirming a conjecture of Borowiecka-Olszewska, Drgas-Burchardt and Ha\l uszczak \cite{B-OD-BHal}. They also proved the following result for the complete tripartite graph $K_{1,m,n}$: $\mathrm{def}(K_{1,m,n})=0$ if $\gcd(m+1,n+1)=1$, and $\mathrm{def}(K_{1,m,n})=1$ otherwise. 

In this paper, we obtain a tight upper bound for the deficiency of complete multipartite graphs. We also determine or bound the deficiency for some classes of complete multipartite graphs.\\

\section{Notation, definitions and auxiliary results}\

A graph $G$ is called a complete $r$-partite ($r\geq 2$) graph if
its vertices can be partitioned into $r$ nonempty independent sets
$V_1,\ldots,V_r$ such that each vertex in $V_i$ is adjacent to all
the other vertices in $V_j$ for $1\leq i<j\leq r$. Let
$K_{n_{1},n_{2},\ldots,n_{r}}$ denote a complete $r$-partite graph
with independent sets $V_1,V_2,\ldots,V_r$ of sizes
$n_{1},n_{2},\ldots,n_{r}$.

Let $G$ and $H$ be two graphs. The join $G\vee H$ of graphs $G$ and $H$ is defined as follows:
\begin{center}
$V(G\vee H)=V(G)\cup V(H)$,\\
$E(G\vee H)=E(G)\cup E(H)\cup\left\{uv\colon\,u\in
V(G)\wedge v\in V(H)\right\}$.
\end{center}

For two positive integers $a$ and $b$ with $a\leq b$, we denote by
$\left[a,b\right]$ the interval of integers
$\left\{a,a+1,\ldots,b-1,b\right\}$. If $a>b$, then $\left[a,b\right]=\emptyset$.
For an interval $[a,b]$ and a nonnegative number $p$, the notation $[a,b]\oplus p$ means: $[a + p, b + p]$.

Let $L=\left(l_{1},\ldots,l_{k}\right)$ be an ordered sequence of
nonnegative integers. The smallest and largest elements of $L$ are
denoted by $\underline L$ and $\overline L$, respectively. The
length (the number of elements) of $L$ is denoted by $\vert L\vert$. An
ordered sequence $L=\left(l_{1},\ldots,l_{k}\right)$ is called a
\emph{continuous sequence} if it contains all integers between
$\underline L$ and $\overline L$.

Let $A$ be a finite set of integers. The deficiency $\mathrm{def}(A)$ of $A$
is the number of integers between $\min A$ and $\max A$ not
belonging to $A$. Clearly, $\mathrm{def}(A)=\max A-\min A-\vert A\vert +1$. A set $A$ with $\mathrm{def}(A)=0$ is an integer interval. Note that if $\alpha$ is a proper edge-coloring of $G$ and $v \in V(G)$, then $\mathrm{def}(v,\alpha)=\mathrm{def}\left(S\left(v,\alpha \right)\right)$.

If $\alpha $ is a proper edge-coloring of a graph $G$ and $v\in
V(G)$, then the smallest and largest colors
of the spectrum $S\left(v,\alpha \right)$ are denoted by $\underline
S\left(v,\alpha \right)$ and $\overline S\left(v,\alpha \right)$,
respectively. Let $\alpha$ be a proper edge-coloring of $G$ and
$V^{\prime}=\{v_{1},\ldots,v_{k}\}\subseteq V(G)$. Consider the sets
$S\left(v_{1},\alpha \right),\ldots,S\left(v_{k},\alpha \right)$.
For a coloring $\alpha$ of $G$ and $V^{\prime}\subseteq V(G)$,
define two ordered sequences $LSE(V^{\prime},\alpha)$ (\emph{Lower
Spectral Edge}) and $USE(V^{\prime},\alpha)$ (\emph{Upper Spectral
Edge}) as follows:
\begin{center}
$LSE(V^{\prime},\alpha)=\left(\underline S\left(v_{i_{1}},\alpha
\right),\underline S\left(v_{i_{2}},\alpha \right),\dots,\underline
S\left(v_{i_{k}},\alpha \right)\right)$,
\end{center}
where $\underline S\left(v_{i_{l}},\alpha \right)\leq \underline
S\left(v_{i_{l+1}},\alpha \right)$ for $1\leq l\leq k-1$, and
\begin{center}
$USE(V^{\prime},\alpha)=\left(\overline S\left(v_{j_{1}},\alpha
\right),\overline S\left(v_{j_{2}},\alpha \right),\dots,\overline
S\left(v_{j_{k}},\alpha \right)\right)$,
\end{center}
where $\overline S\left(v_{j_{l}},\alpha \right)\leq \overline
S\left(v_{j_{l+1}},\alpha \right)$ for $1\leq l\leq k-1$.\\

We will use the following results.

\begin{lemma} \cite{TePet}
\label{Hayklemma} If $K_{n,n}$ is a complete bipartite graph with
bipartition $(U,V)$, then for any continuous sequence $L$ with
length $n$, $K_{n,n}$ has a proper edge-coloring $\alpha$ such that:

\begin{description}
\item[1)] for any $u\in U$, $S(u,\alpha)$ is an integer interval;

\item[2)] for any $v\in V$, $S(v,\alpha)$ is an integer interval;

\item[3)] $LSE(U,\alpha)=LSE(V,\alpha)=L$.
\end{description}
\end{lemma}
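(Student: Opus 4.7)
The plan is to prove Lemma~\ref{Hayklemma} by an explicit construction. Assume $L=(l_1,\ldots,l_n)$ is written in non-decreasing order; the continuity of $L$ together with $|L|=n$ then forces $l_{i+1}-l_i\in\{0,1\}$ for every $i<n$. Label $U=\{u_1,\ldots,u_n\}$, $V=\{v_1,\ldots,v_n\}$, and for $1\le i,j\le n$ put
\[
\alpha(u_iv_j)=
\begin{cases}
l_1+k-1, & \text{if }l_1+k-1\ge l_i,\\
l_1+k-1+n, & \text{if }l_1+k-1<l_i,
\end{cases}
\qquad k=((i+j-2)\bmod n)+1.
\]
Since the base values $\{l_1+k-1:k=1,\ldots,n\}$ in row $i$ form $[l_1,l_1+n-1]$, lifting those strictly below $l_i$ by $n$ swaps the low tail $[l_1,l_i-1]$ for the top tail $[l_1+n,l_i+n-1]$, making row $i$ a permutation of $[l_i,l_i+n-1]$. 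This yields property~1) and $LSE(U,\alpha)=L$.

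The core step is to show that each column is also an interval. Fix $j$, and for $k\in\{1,\ldots,n\}$ let $f_j(k)=((k-j)\bmod n)+1$ be the row in which base value $l_1+k-1$ sits in column $j$. The key claim is that the set of $k$ at which replacement occurs is a prefix $\{1,\ldots,K_j-1\}$ of $\{1,\ldots,n\}$. Continuity of $L$ enters here: if $k$ is unreplaced, then $l_{f_j(k)}\le l_1+k-1$, and either $f_j(k)<n$, so $l_{f_j(k+1)}=l_{f_j(k)+1}\le l_{f_j(k)}+1\le l_1+k$, or $f_j(k)=n$, so $f_j(k+1)=1$ and $l_{f_j(k+1)}=l_1\le l_1+k$; in either case $k+1$ is also unreplaced. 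Consequently the $n$ entries of column $j$ are $[l_1+n,\,l_1+n+K_j-2]\cup[l_1+K_j-1,\,l_1+n-1]=[m_j,m_j+n-1]$ where $m_j:=l_1+K_j-1$, proving property~2).

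To finish, I would verify $LSE(V,\alpha)=L$ by showing that $(m_1,\ldots,m_n)$ and $(l_1,\ldots,l_n)$ agree as multisets. A double count does it: for each $v\in[l_1,\overline L]$ the number of $u_i$ with $v\in S(u_i,\alpha)$ equals $|\{i:l_i\le v\}|$ (since $l_i+n-1\ge\overline L\ge v$), while the number of $v_j$ with $v\in S(v_j,\alpha)$ equals $|\{j:m_j\le v\}|$; both count the edges of color $v$, so the cumulative distributions of $(l_i)$ and $(m_j)$ coincide on $[l_1,\overline L]$, and differencing yields equality of multiplicities. The main obstacle is the prefix-of-replacements claim in the middle paragraph; it is the one place where continuity of $L$ is truly used, and the cyclic wrap-around case $f_j(k)=n$ demands a separate argument. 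The remaining verifications are bookkeeping.
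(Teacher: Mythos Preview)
The paper does not prove this lemma at all: it is quoted as an auxiliary result from \cite{TePet} and used as a black box later in Theorem~3.7. So there is no ``paper's own proof'' to compare against; your proposal supplies what the paper omits.

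Your construction is correct. The row analysis is straightforward, and the prefix-of-replacements claim for columns is the heart of the matter; your inductive step handles both the generic case $f_j(k)<n$ (where continuity gives $l_{f_j(k)+1}\le l_{f_j(k)}+1$) and the wrap-around $f_j(k)=n$ (where $l_1\le l_1+k$ is trivial). One small point worth making explicit in the final double-count: you only compare the cumulative distributions of $(l_i)$ and $(m_j)$ on $[l_1,\overline L]$, but this suffices because $m_j=l_1+K_j-1\ge l_1$ for all $j$, and taking $v=\overline L$ gives $|\{j:m_j\le\overline L\}|=|\{i:l_i\le\overline L\}|=n$, so every $m_j$ already lies in $[l_1,\overline L]$. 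With that, the multisets coincide and $LSE(V,\alpha)=L$ follows.
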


\begin{lemma} \cite{Kampreprint,KamDiss}
\label{Kn,nmlemma} For any $m,n\in \mathbb{N}$, the complete bipartite graph $K_{n,nm}$ with bipartition $(U,V)$ has an interval $nm$-coloring $\alpha$ such that for any $u\in U$, $S(u,\alpha)=[1,nm]$.
\end{lemma}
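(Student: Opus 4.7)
The plan is to prove Lemma~\ref{Kn,nmlemma} by partitioning $V$ into $m$ blocks of size $n$ and coloring each of the resulting copies of $K_{n,n}$ via Lemma~\ref{Hayklemma}, using disjoint color palettes whose union is $[1,nm]$. Write $V = V_1 \cup \cdots \cup V_m$ with $|V_k|=n$, so that each induced subgraph $G[U\cup V_k]$ is a copy of $K_{n,n}$. For each $k\in[1,m]$ I would invoke Lemma~\ref{Hayklemma} with the constant sequence $L_k = ((k-1)n+1,\ldots,(k-1)n+1)$ of length $n$; this is vacuously a continuous sequence since $\underline L_k = \overline L_k$. The lemma returns a proper edge-coloring $\alpha_k$ of $G[U\cup V_k]$ whose vertex spectra are integer intervals and which satisfies $LSE(U,\alpha_k)=LSE(V_k,\alpha_k)=L_k$. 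Since every vertex of $U$ and every vertex of $V_k$ has degree $n$ in this subgraph, the interval condition forces $S(u,\alpha_k)=S(v,\alpha_k)=[(k-1)n+1,kn]$ for all $u\in U$ and $v\in V_k$.

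I would then define $\alpha$ on $E(K_{n,nm})$ as the common extension of the $\alpha_k$'s and verify the required properties. Properness at any $v\in V_k$ is inherited directly from $\alpha_k$, and properness at any $u\in U$ follows because the palettes $[(k-1)n+1,kn]$ are pairwise disjoint for distinct values of $k$. The interval property at each $v\in V_k$ is inherited from $\alpha_k$, while at each $u\in U$ we obtain
\[
S(u,\alpha) \;=\; \bigcup_{k=1}^{m} [(k-1)n+1,\, kn] \;=\; [1,nm],
\]
which is simultaneously an integer interval and exactly the prescribed target, and which also shows that all $nm$ colors appear. Hence $\alpha$ is an interval $nm$-coloring with $S(u,\alpha)=[1,nm]$ for every $u\in U$.

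I do not anticipate any real obstacle; the only subtle point is that a constant sequence qualifies as a continuous sequence under the paper's definition, which it does because $\underline L_k$ and $\overline L_k$ coincide. If a self-contained argument is preferred over invoking Lemma~\ref{Hayklemma}, one can instead write down the explicit formula $\alpha(u_i v_{k,\ell}) = (k-1)n + ((i+\ell-2) \bmod n) + 1$ after indexing $U=\{u_1,\ldots,u_n\}$ and $V_k=\{v_{k,1},\ldots,v_{k,n}\}$; this realizes the same block structure, and the verification of properness and of the interval condition on both sides of the bipartition proceeds by the same disjoint-palette bookkeeping.
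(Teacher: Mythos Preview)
The paper does not actually prove this lemma; it is quoted from Kamalian's work \cite{Kampreprint,KamDiss} and used as a black box. So there is no ``paper's own proof'' to compare against.

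Your argument is correct. The reduction to $m$ copies of $K_{n,n}$ with disjoint colour palettes $[(k-1)n+1,kn]$ is exactly the right structure, and your observation that a constant sequence is trivially continuous (so that Lemma~\ref{Hayklemma} applies) is valid. The verification that the resulting $\alpha$ is proper and that every $u\in U$ receives the full interval $[1,nm]$ is straightforward and correctly carried out. One stylistic remark: invoking Lemma~\ref{Hayklemma} here is a bit heavy-handed, since that lemma is both chronologically later and strictly more general than what you need; your explicit Latin-square formula $\alpha(u_i v_{k,\ell}) = (k-1)n + \bigl((i+\ell-2)\bmod n\bigr) + 1$ is the more natural self-contained route and is almost certainly closer to Kamalian's original argument. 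Either way the proof goes through.
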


\begin{lemma}\cite{PetArXiv}
\label{multipartite} If $G$ is a complete $r$-partite graph with $n$ vertices in each part, then $G\in \mathfrak{N}$ if and only if $nr$ is even. Moreover, if $nr$ is even, then $G$ has an interval $(r-1)n$-coloring $\alpha$ such that for any $v\in V(G)$, $S(v,\alpha)=[1,(r-1)n]$. 
\end{lemma}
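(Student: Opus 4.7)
I would recast the claim as a $1$-factorization problem. An interval $(r-1)n$-coloring of $G=K_{n,\ldots,n}$ in which every spectrum equals $[1,(r-1)n]$ is nothing but a $1$-factorization of $G$: every vertex has degree $(r-1)n$ and meets each of the $(r-1)n$ colors exactly once, so every color class is a perfect matching. Necessity of $rn$ even then follows from a parity count. If $rn$ were odd then $r$ and $n$ are both odd, making $(r-1)n$ even; but any matching in a graph on $rn$ (odd) vertices has at most $(rn-1)/2$ edges, so a proper $(r-1)n$-edge-coloring could cover at most $(r-1)n(rn-1)/2<(r-1)n\cdot rn/2=|E(G)|$ edges, forcing $\chi'(G)>\Delta(G)$, hence $G\notin\mathfrak{N}$ by the result of Asratian and Kamalian.

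For sufficiency when $r$ is even, I would fix a $1$-factorization $N_1,\ldots,N_{r-1}$ of $K_r$ and, for each $k\in[1,r-1]$, apply Lemma~\ref{Hayklemma} to every $K_{n,n}$ block lying between parts $V_i,V_j$ with $\{i,j\}\in N_k$, using the constant continuous sequence $L=((k-1)n+1,\ldots,(k-1)n+1)$ of length $n$. This colors each such block with the $n$ colors $[(k-1)n+1,kn]$ and gives every vertex in the block that whole interval as its spectrum. Since $N_k$ is a perfect matching of $K_r$, each vertex of $G$ sits in exactly one block of layer $k$ and so contributes $[(k-1)n+1,kn]$ to its spectrum from that layer; ranging over $k$ yields spectrum $\bigcup_{k=1}^{r-1}[(k-1)n+1,kn]=[1,(r-1)n]$ at every vertex.

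For the harder case, $r$ odd and $n$ even, the layering strategy collapses. A short counting argument shows that no fixed per-pair assignment of $n$-intervals $I_1,\ldots,I_r$ to the near-matchings of a proper $r$-edge-coloring of $K_r$ can tile $[1,(r-1)n]$ at every vertex simultaneously: each element of $[1,(r-1)n]$ would have to lie in at least two of the $r$ intervals (so that removing any one interval still leaves a cover), giving $rn=\sum_k|I_k|\geq 2(r-1)n$, impossible for $r\geq 3$. I would instead exploit that $G$ is $(r-1)n$-regular of even degree with an even number of vertices and use a Hamilton decomposition of $G$ into $(r-1)n/2$ edge-disjoint Hamilton cycles, a classical fact for balanced complete multipartite graphs of even degree. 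Each cycle has even length $rn$, so it splits into two perfect matchings $M_k^1,M_k^2$ via an alternating $2$-edge-coloring; assigning colors $2k-1,2k$ to $M_k^1,M_k^2$ respectively produces a proper $(r-1)n$-edge-coloring in which every vertex sees exactly the pair $\{2k-1,2k\}$ on the $k$-th cycle and hence has spectrum $\bigcup_{k=1}^{(r-1)n/2}\{2k-1,2k\}=[1,(r-1)n]$. The main obstacle is the Hamilton decomposition itself: absent a citable result, one must construct the $(r-1)n/2$ edge-disjoint Hamilton cycles explicitly, for instance via a difference-method construction on a Cayley representation of $G$ on $\mathbb{Z}_r\times\mathbb{Z}_n$, and verifying that these cycles are edge-disjoint, Hamiltonian, and use each pair of parts the correct number of times is the technical heart of the proof.
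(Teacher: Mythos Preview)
The paper does not prove this lemma; it is quoted from \cite{PetArXiv} as an auxiliary tool, so there is no in-paper argument to set your plan against.

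Assessed on its own, your plan is sound. The necessity argument is correct, and the construction for even $r$ via a $1$-factorization of $K_r$ combined with block-wise $K_{n,n}$ colorings (Lemma~\ref{Hayklemma} with a constant sequence) is the standard one and works exactly as you describe. For $r$ odd and $n$ even, reducing to a Hamilton decomposition of $K_{n,\ldots,n}$ and then $2$-coloring each (even-length) Hamilton cycle into two perfect matchings is also correct. Your only declared obstacle---the existence of that Hamilton decomposition---is in fact a classical theorem: Laskar and Auerbach (Discrete Math.\ \textbf{14} (1976), 265--268) proved that the balanced complete $r$-partite graph $K_{r\times n}$ decomposes into edge-disjoint Hamilton cycles whenever $(r-1)n$ is even, which is precisely your case. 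So you need not build the cycles by hand via a difference construction; citing that result closes the argument. More generally, the ``moreover'' clause is equivalent to the assertion that $K_{r\times n}$ is Class~1 whenever $rn$ is even, and any proof of that (there are several in the literature) immediately yields the required coloring, since a proper $\Delta$-edge-coloring of a $\Delta$-regular graph is exactly an interval $\Delta$-coloring with every spectrum equal to $[1,\Delta]$.
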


\begin{lemma}
\label{lbmultipartite} For any $n_{1},n_{2},\ldots,n_{r}\in
\mathbb{N}$ ($r\geq 3$) with $n_{1}\leq n_{2}\leq \cdots \leq n_{r}$ and $\sum_{i=1}^{r}n_{i}$ is odd,
$$\mathrm{def}\left(K_{n_{1},n_{2},\ldots,n_{r}}\right)\geq \sum_{i=2}^{r}\frac{(n_{1}+1)n_{i}-n_{i}^{2}}{2}.$$
\end{lemma}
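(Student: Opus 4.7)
The plan is to apply the general deficiency lower bound of Borowiecka-Olszewska, Drgas-Burchardt and Ha\l{}uszczak \cite{B-OD-BHal} recalled in the introduction, namely
$$\mathrm{def}(G) \;\geq\; \frac{2|E(G)| - (|V(G)|-1)\Delta(G)}{2}$$
for any graph $G$ with an odd number of vertices, and then to reduce the right-hand side, by direct algebraic manipulation, to the sum appearing in the statement of the lemma.

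Setting $N = \sum_{i=1}^r n_i$, the hypothesis ensures $N$ is odd, so the cited inequality applies to $G = K_{n_1, \ldots, n_r}$. The three ingredients needed are: $|V(G)| = N$; since $n_1 \leq n_2 \leq \cdots \leq n_r$, the maximum degree is $\Delta(G) = N - n_1$, attained by every vertex of $V_1$; and, counting all inter-part pairs, $|E(G)| = \tfrac{1}{2}\bigl(N^2 - \sum_{i=1}^r n_i^2\bigr)$.

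Plugging these into the cited bound gives
$$\mathrm{def}(G) \;\geq\; \frac{1}{2}\Bigl(N^2 - \sum_{i=1}^r n_i^2 - (N-1)(N-n_1)\Bigr),$$
and expanding $(N-1)(N - n_1)$, using the identity $N - n_1 = \sum_{i=2}^r n_i$, and collecting the terms indexed by $i \geq 2$ rewrites the right-hand side as
$$\frac{1}{2}\Bigl((n_1+1)\sum_{i=2}^r n_i - \sum_{i=2}^r n_i^2\Bigr) \;=\; \sum_{i=2}^r \frac{(n_1+1)n_i - n_i^2}{2},$$
which is exactly the claimed inequality. The argument is essentially computational, so no real obstacle arises; the only point that requires attention is correctly identifying $\Delta(G) = N - n_1$ from the monotonicity assumption on the part sizes, and then shepherding the terms $n_1^2$ and $\sum_{i \geq 2} n_i^2$ through the arithmetic so that the outer $n_i$-factor $(n_1+1)$ emerges cleanly.
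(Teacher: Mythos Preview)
Your proof is correct and follows exactly the same approach as the paper's own argument: apply the Borowiecka-Olszewska--Drgas-Burchardt--Ha\l{}uszczak lower bound for graphs with an odd number of vertices, plug in $|V|$, $|E|$, and $\Delta$ for $K_{n_1,\ldots,n_r}$, and simplify. The only cosmetic difference is that you write $|E(G)| = \tfrac{1}{2}\bigl(N^2 - \sum_i n_i^2\bigr)$ whereas the paper writes $|E(G)| = \sum_{i<j} n_i n_j$, but the ensuing algebra is the same.
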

\begin{proof}
Clearly, $|V(K_{n_1,n_2,\ldots, n_r})| = \sum_{i=1}^{r}n_{i}$, $|E(K_{n_1,n_2,\ldots, n_r})| =   
\sum \limits_{\substack{1 \leq i < j \leq r}}{n_{i}n_{j}}$ and $\Delta(K_{n_{1}, \ldots,n_{r}}) = \sum_{i=2}^{r}n_{i}$.
In \cite{B-OD-BHal}, it was proved that if $G$ is a graph with an odd number of
vertices, then $\mathrm{def}(G)\geq \frac{2\vert E(G)\vert -(\vert V(G)\vert
-1)\Delta(G)}{2}$. From this and taking into account that $\sum_{i=1}^{r}n_{i}$ is odd, we obtain 
\begin{equation*}
\begin{aligned}
\mathrm{def}\left(K_{n_{1},n_{2},\ldots,n_{r}}\right) &\geq \frac{1}{2} \left (\sum \limits_{\substack{1 \leq i<j \leq r}}{2n_{i}n_{j}} -  (n_{1} + \cdots + n_{r}-1)(n_{2} + \cdots + n_{r})  \right ) = \sum \limits_{\substack{1 \leq i<j \leq r}}{n_{i}n_{j}} -  \\ &- \frac{(n_{1}n_{2} + \ldots n_{1}n_{r} + n_{2}^{2} + \cdots n_{2}n_{r} + \ldots + n_{r}n_{2} + \ldots + n_{r}^{2}) -( n_{2} + \cdots + n_{r})}{2}  = \\ &=
\frac{n_{1}n_{2} + \cdots + n_{1}n_{r} - (n_{2}^{2} + \cdots + n_{r}^{2}) + n_{2} + \cdots + n_{r}}{2} = \sum \limits_{i = 2}^{r}{\frac{(n_{1} + 1)n_{i}-n_{i}^{2}}{2}}.
\end{aligned}
\end{equation*}
\end{proof}

\begin{theorem}
\label{mytheorem1.1}\cite{CasselgrenKhachatrianPetrosyan} If for a graph $G$, there exists a number $d$ such that $d$ divides $d_{G}(v)$ for every $v\in V(G)$ and $d$ does not divide $\vert E(G)\vert$, then $G\notin \mathfrak{N}$.
\end{theorem}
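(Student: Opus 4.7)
The plan is to argue by contradiction: I would assume $G\in\mathfrak{N}$ and fix an interval $t$-coloring $\alpha$ for some $t$, then use $\alpha$ to derive that $d\mid|E(G)|$, which contradicts the hypothesis.

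The key observation I would exploit is the following. Since $\alpha$ is a proper edge-coloring, $S(v,\alpha)$ has exactly $d_G(v)$ elements, and since $\alpha$ is an \emph{interval} coloring, these elements form an integer interval. Because $d\mid d_G(v)$ by assumption, any such interval of length $d_G(v)$ splits perfectly evenly among the $d$ residue classes modulo $d$: exactly $d_G(v)/d$ of the colors at $v$ are congruent to $j$ modulo $d$, for every $j\in\{0,1,\ldots,d-1\}$.

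From here I would partition the edge set by residue. For each $j\in\{0,1,\ldots,d-1\}$, set $E^{(j)}=\{e\in E(G):\alpha(e)\equiv j\pmod d\}$; by the previous observation, each vertex $v$ is incident to exactly $d_G(v)/d$ edges of $E^{(j)}$. A standard double-count of incidences then gives $|E^{(j)}|=|E(G)|/d$, independently of $j$. Since $|E^{(j)}|$ must be a nonnegative integer, $d$ divides $|E(G)|$, contradicting the hypothesis; hence $G\notin\mathfrak{N}$.

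There is no real technical obstacle in this argument: the whole proof reduces to the residue-distribution remark above, and the rest is routine counting. The only subtlety worth flagging is that the argument breaks down as soon as one drops the interval hypothesis, since a generic set of $d_G(v)$ distinct colors need not distribute evenly among residue classes modulo $d$; it is precisely the consecutiveness of $S(v,\alpha)$ that is doing the work.
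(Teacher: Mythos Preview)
Your argument is correct. The paper does not actually prove this statement: it is quoted without proof as an auxiliary result from \cite{CasselgrenKhachatrianPetrosyan}, so there is no in-paper proof to compare against. For what it is worth, the residue-class counting argument you give is exactly the standard proof of this fact; the only step worth making explicit is the handshake identity $\sum_{v}d_G(v)/d=2|E^{(j)}|$, which immediately gives $|E^{(j)}|=|E(G)|/d$ and hence the desired divisibility.
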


\bigskip

\section{Bounds on the deficiency of complete multipartite graphs}\

In this section we investigate the problem of determining or bounding of the deficiency of complete multipartite graphs. We begin our consideration with an upper bound on the deficiency of complete multipartite graphs.  

\begin{theorem}
\label{general_bound} For any $n_{1},n_{2},\ldots,n_{r}\in
\mathbb{N}$ ($r\geq 3$) with $n_{1}\geq n_{r}\geq n_{2} \geq \cdots \geq n_{r-1}$,
$$\mathrm{def}\left(K_{n_{1},n_{2},\ldots,n_{r}}\right)\leq \sum_{i=2}^{r-1}n_{i}^{2}.$$
\end{theorem}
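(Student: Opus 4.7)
The plan is to construct an explicit proper edge-coloring $\alpha$ of $G = K_{n_1,\ldots,n_r}$ achieving deficiency exactly $\sum_{i=2}^{r-1} n_i^2$. Write $V_1 = \{u_1,\ldots,u_{n_1}\}$, $V_r = \{w_1,\ldots,w_{n_r}\}$, and $W = V_2 \cup \cdots \cup V_{r-1}$, set $N = n_2+\cdots+n_{r-1}$, and list $W$ as $y_1,\ldots,y_N$ so that each $V_l$ (for $l \in \{2,\ldots,r-1\}$) occupies the consecutive index block $[M_l+1,M_l+n_l]$, where $M_l = \sum_{i=2}^{l-1} n_i$. Every edge will receive a color given by a simple linear formula, and the formulas are chosen so that at each $u_i$ and each $w_j$ the color ranges tile consecutively into an integer interval, while at each $y_k \in V_l$ they assemble into an interval with a single missing block of length $n_l$.

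Concretely, I would set $\alpha(u_iw_j) = N+i+j-1$ on the edges of $K_{n_1,n_r}$; $\alpha(u_iy_k) = i+k-1$ on the $V_1$--$W$ edges; $\alpha(w_jy_k) = N+n_1+j+k-1$ on the $V_r$--$W$ edges; and $\alpha(y_ky_{k'}) = n_1+k+k'-1$ on the edges of $K_{n_2,\ldots,n_{r-1}}$ inside $W$. The first three formulas are the standard interval-coloring shifts for complete bipartite graphs (in the spirit of Lemma~\ref{Hayklemma}), and the fourth is the classical ``sum'' proper edge-coloring of $K_N$ shifted by $n_1$, restricted to $K_{n_2,\ldots,n_{r-1}} \subseteq K_N$.

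A direct calculation then yields that the spectrum of each $u_i$ is the integer interval $[i, N+n_r+i-1]$ and that of each $w_j$ is $[N+j, 2N+n_1+j-1]$, so vertices in $V_1 \cup V_r$ contribute $0$ to the deficiency. For $y_k \in V_l$, the two outer intervals $[k, n_1+k-1]$ (Phase~II) and $[N+n_1+k, N+n_1+n_r+k-1]$ (Phase~III), together with the Phase~IV colors $\{n_1+k+k'-1 : k' \in [1,N]\setminus[M_l+1, M_l+n_l]\} = [n_1+k, n_1+k+N-1] \setminus [n_1+k+M_l, n_1+k+M_l+n_l-1]$, assemble into the interval $[k, N+n_1+n_r+k-1]$ missing exactly the block $[n_1+k+M_l, n_1+k+M_l+n_l-1]$ of length $n_l$. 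Hence $\mathrm{def}(y_k, \alpha) = n_l$, and summing over $W$ gives $\sum_{l=2}^{r-1} n_l \cdot n_l = \sum_{i=2}^{r-1} n_i^2$.

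The critical step, and where I expect to spend the most care, is the interaction of Phase~IV with Phases~II and~III at the vertices of $W$: one must verify that the ``sum'' formula remains a proper edge-coloring on $K_{n_2,\ldots,n_{r-1}}$ (immediate, since it is already proper on $K_N$) and, more delicately, that the fact that each $V_l$ occupies a consecutive block of indices translates the missing Phase~IV colors at $y_k \in V_l$ into a single consecutive gap of exactly the right length $n_l$, placed flush against the Phase~II and Phase~III intervals so that the overall spectrum has deficiency $n_l$ and no more. The ordering hypothesis $n_1 \geq n_r \geq n_2 \geq \cdots \geq n_{r-1}$ enters only through the choice of which two parts to label as $V_1$ and $V_r$: picking the two largest minimizes $\sum_{i=2}^{r-1} n_i^2$, though the construction itself is valid for any such labeling.
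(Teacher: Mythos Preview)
Your construction is correct and is in fact the very same edge-coloring the paper uses, just presented differently. The paper labels all vertices consecutively as $v_1,\ldots,v_{\sigma(r)}$ (with $V_1$ first, then $V_2,\ldots,V_{r-1}$, then $V_r$) and defines the single formula $\alpha(v_av_b)=1+a+b-\mathbf{s}$; if you assign global indices $u_i\mapsto i$, $y_k\mapsto n_1+k$, $w_j\mapsto n_1+N+j$, your four phase formulas all collapse to exactly this sum-of-indices rule (with the same additive constant), so the two colorings coincide edge for edge.
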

\begin{proof}
For $0\leq i\leq r$, define a sum $\sigma(i)$ as follows:
\begin{center}
$\sigma(i)=\left\{
\begin{tabular}{ll}
$0$, & if $i=0$,\\
$\sum_{j=1}^{i}n_{j}$, & if $1\leq i\leq r$.\\
\end{tabular}%
\right.$
\end{center}
Let $V_1,\dots,V_r$ be the $r$ independent sets of vertices of
$K_{n_{1},n_{2},\ldots,n_{r}}$, and let
$$V_{i}=\left\{v_{\sigma(i-1)+1},\ldots,v_{\sigma(i)}\right\}$$ for
$1\leq i\leq r$. Also, let $\textbf{s}=\min\{i+j: v_{i}v_{j}\in E\left(
K_{n_{1},n_{2},\ldots,n_{r}}\right)\}$ and $\textbf{S}=\max\{i+j: v_{i}v_{j}\in E\left(K_{n_{1},n_{2},\ldots,n_{r}}\right)\}$.\\

Define an edge-coloring $\alpha$ of $K_{n_{1},n_{2},\ldots,n_{r}}$
as follows: for any $v_{i}v_{j}\in E\left(
K_{n_{1},n_{2},\ldots,n_{r}}\right)$, let

$$\alpha\left(v_{i}v_{j}\right)= 1+i+j - \textbf{s}.$$

Let us prove that $\alpha$ is a proper edge-coloring with $\mathrm{def}\left(K_{n_{1},n_{2},\ldots,n_{r}},\alpha\right)=\sum_{i=2}^{r-1}n_{i}^{2}$.

By the definition, we have that $\alpha$ is a proper edge-coloring with colors $1,2,\ldots,1+\textbf{S}-\textbf{s}$ and for each $v_{i}\in V_{1}$ or $v_{i}\in V_{r}$, $\mathrm{def}(v_{i},\alpha)=0$. Next let $v_{i}\in V_{l}$, where $2\leq l\leq r-1$. By the definition of $\alpha$, we have that $S\left(v_{i},\alpha\right)$
contains colors $i+2-\textbf{s},\ldots,1+i+\sigma(r)-\textbf{s}$ except for $$1+i+\sigma(l-1)+1-\textbf{s},\ldots,1+i+\sigma(l)-\textbf{s}.$$ 
This implies that for each $v_{i}\in V_{l}$ ($2\leq l\leq r-1$), $\mathrm{def}(v_{i},\alpha)=n_{l}$. Thus, $\mathrm{def}\left(K_{n_{1},n_{2},\ldots,n_{r}}\right)\leq \sum_{i=2}^{r-1}n_{i}^{2}$. 
\end{proof}

An example of the coloring $\alpha$ from the proof of Theorem \ref{general_bound} for $n_{1}=4, n_{2}=2, n_{3}=1$ and $n_{4}=3$ one can find in Fig. 1. 

\begin{corollary}
\label{tripartite}
For any $l,m,n\in \mathbb{N}$,
$$\mathrm{def}(K_{l,m,n})\leq \min\{l^{2},m^{2},n^{2}\}.$$
\end{corollary}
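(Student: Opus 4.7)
The plan is to derive the corollary as an immediate consequence of Theorem \ref{general_bound} applied with $r = 3$, after choosing a convenient labeling of the three parts. Since the graph $K_{l,m,n}$ depends only on the multiset $\{l,m,n\}$ and not on the order in which the parts are listed, the deficiency is invariant under permutation of the three numbers, so I am free to relabel them to fit the hypothesis of the theorem.

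First I would set $a = \min\{l,m,n\}$ and let $b$, $c$ be the remaining two values ordered so that $a \leq b \leq c$. I would then invoke Theorem \ref{general_bound} with the assignment $n_1 = c$, $n_2 = a$, $n_3 = b$. This satisfies the (slightly unusual) ordering condition $n_1 \geq n_3 \geq n_2$ in the theorem's hypothesis, since it reads $c \geq b \geq a$. The theorem then gives
\[
\mathrm{def}(K_{c,a,b}) \;\leq\; \sum_{i=2}^{r-1} n_i^{2} \;=\; n_2^{2} \;=\; a^{2}.
\]
Because the deficiency of $K_{l,m,n}$ equals that of $K_{c,a,b}$, and $a^2 = \min\{l^2, m^2, n^2\}$, the desired bound follows.

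There is essentially no obstacle here, the corollary being a direct specialization. The only point requiring a moment of care is matching the somewhat nonstandard ordering hypothesis in Theorem \ref{general_bound}: the largest part must sit in position $n_1$, the second-largest in position $n_r = n_3$, and crucially the smallest part must occupy the middle position $n_2$, because the bound $\sum_{i=2}^{r-1} n_i^2$ collapses (for $r=3$) precisely to the square of the part placed at index $2$. Placing the smallest part there is exactly what converts the bound into $\min\{l^2,m^2,n^2\}$.
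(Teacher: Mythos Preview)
Your proposal is correct and is exactly the approach implicit in the paper: the corollary is stated there without proof, as an immediate specialization of Theorem~\ref{general_bound} with $r=3$, obtained by placing the smallest part in position $n_2$ so that the sum $\sum_{i=2}^{r-1} n_i^2$ reduces to $\min\{l^2,m^2,n^2\}$.
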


Note that the upper bound in Corollary \ref{tripartite} is sharp, since $\mathrm{def}(K_{1,m,n})=1$ if $\gcd(m+1,n+1)\neq 1$ \cite{PetrosHrant}. Next, we consider the deficiency of complete $(r+1)$-partite graphs with $r$ parts with the same size and one large part. 

\begin{center}
\begin{tikzpicture}[thick,
  every node/.style={draw,circle, fill=black},
  ->,shorten >= 3pt,shorten <= 3pt
]

\begin{scope}[xshift=0cm,yshift=-3cm, start chain = going below,node distance=15mm]
\foreach \i in {1,...,4}
  \node[on chain] (v\i) [label=left: $v_\i$] {};
\end{scope}

\begin{scope}[xshift=4.5cm,yshift=0cm, start chain=going right,node distance=25mm]
\foreach \i in {5,6}
  \node[on chain] (v\i) [label=above: $v_\i$] {};
\end{scope}

\begin{scope}[xshift=6.3cm,yshift=-11cm,start chain=going right,node distance=25mm]
\foreach \i in {7}
  \node[on chain] (v\i) [label=right: $v_\i$] {};
\end{scope}

\begin{scope}[xshift=13cm,yshift=-3cm,start chain=going below,node distance=25mm]
\foreach \i in {8,9,10}
  \node[on chain] (v\i) [label=right: $v_{\i}$] {};
\end{scope}

\begin{scope}[every node/.style={fill=white, inner sep=0pt,minimum size=1pt, circle,text=black}, every edge/.style={draw=black}]
\foreach \i in {1,...,4}
	\foreach \j in {5,...,10}
		\path[-] (v\i) edge node[pos=0.1, font=\scriptsize] {$\the\numexpr \i+\j-5$} node[pos=0.9, font=\scriptsize] {$\the\numexpr \i+\j-5$}  (v\j);
\foreach \i in {5,6}
	\foreach \j in {7,...,10}
		\path[-] (v\i) edge node[pos=0.1, font=\scriptsize] {$\the\numexpr \i+\j-5$} node[pos=0.9, font=\scriptsize] {$\the\numexpr \i+\j-5$}  (v\j);
\foreach \i in {7}
	\foreach \j in {8,...,10}
		\path[-] (v\i) edge node[pos=0.1, font=\scriptsize] {$\the\numexpr \i+\j-5$} node[pos=0.9, font=\scriptsize] {$\the\numexpr \i+\j-5$}  (v\j);
\end{scope}

\end{tikzpicture}\\
The proper edge-coloring $\alpha$ with $12$ colors of $K_{4,2,1,3}$ with $\mathrm{def}(K_{4,2,1,3},\alpha)=5$.
\end{center}

\begin{theorem}
\label{multi_trn}
For any $n,r,t \in \mathbb{N}$, if $nr$ is even, then for the complete $(r+1)$-partite graph $K_{n,\ldots,n,trn}$, $$\mathrm{def}(K_{n,\ldots,n,trn}) = 0.$$
\end{theorem}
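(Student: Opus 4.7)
The plan is to decompose the edge set of $K_{n,\ldots,n,trn}$ into two pieces that can be interval-colored separately, and then stack the color ranges so that the combined coloring is still an interval coloring. Let $V_{1},\ldots,V_{r}$ denote the parts of size $n$, and let $W$ denote the part of size $trn$. Consider the subgraph $H_{1}$ induced by $V_{1}\cup\cdots\cup V_{r}$, which is a complete $r$-partite graph with $n$ vertices in each part, and the bipartite subgraph $H_{2}$ with bipartition $\bigl(V_{1}\cup\cdots\cup V_{r},\,W\bigr)$, which is $K_{rn,trn}$. These two subgraphs partition $E(K_{n,\ldots,n,trn})$.

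First I would apply Lemma \ref{multipartite} to $H_{1}$: since $nr$ is even, $H_{1}$ admits an interval $(r-1)n$-coloring $\alpha_{1}$ in which every vertex has spectrum $[1,(r-1)n]$. Next I would apply Lemma \ref{Kn,nmlemma} to $H_{2}=K_{rn,trn}$, taking the ``small'' side of the lemma to be $U=V_{1}\cup\cdots\cup V_{r}$ (of size $rn$) and $m=t$; this yields an interval $trn$-coloring $\alpha_{2}$ in which every vertex of $U$ has spectrum $[1,trn]$ and, in particular, every vertex of $W$ has an interval spectrum of length $rn$ (its degree in $H_{2}$). Shifting $\alpha_{2}$ by $(r-1)n$, i.e.\ using the color set $[1,trn]\oplus(r-1)n=[(r-1)n+1,(r-1)n+trn]$, gives an edge-coloring $\alpha_{2}'$ of $H_{2}$ whose color set is disjoint from that of $\alpha_{1}$.

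Define $\alpha$ on $K_{n,\ldots,n,trn}$ to be $\alpha_{1}$ on $E(H_{1})$ and $\alpha_{2}'$ on $E(H_{2})$. Properness is automatic because the color sets are disjoint and $\alpha_{1},\alpha_{2}$ are already proper on their respective subgraphs. For a vertex $v\in V_{i}$ ($1\le i\le r$), the spectrum is
\[
S(v,\alpha)=[1,(r-1)n]\cup[(r-1)n+1,(r-1)n+trn]=[1,(r-1)n+trn],
\]
an interval. For a vertex $w\in W$, the spectrum is just the $\alpha_{2}'$-spectrum of $w$, which is an interval inside $[(r-1)n+1,(r-1)n+trn]$. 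Both color classes use all their colors, so $\alpha$ uses every color in $[1,(r-1)n+trn]$, and is therefore an interval $((r-1)n+trn)$-coloring. This forces $\mathrm{def}(K_{n,\ldots,n,trn})=0$.

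There is no real obstacle in this argument beyond checking hypotheses: the only nontrivial point is verifying that both input lemmas apply, namely that $nr$ is even (given) so Lemma \ref{multipartite} is usable, and that $rn\mid trn$ so $K_{rn,trn}$ fits the form $K_{n',n'm}$ required by Lemma \ref{Kn,nmlemma}. Once these match, the disjointness of color ranges and the concatenation of spectra for small-part vertices do all the work.
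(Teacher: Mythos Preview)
Your argument is correct and essentially identical to the paper's own proof: the same decomposition of $E(K_{n,\ldots,n,trn})$ into the balanced complete $r$-partite piece $H_{1}$ and the complete bipartite piece $H_{2}\cong K_{rn,trn}$, the same invocation of Lemmas~\ref{multipartite} and~\ref{Kn,nmlemma}, and the same shift of the $H_{2}$-coloring by $(r-1)n$ to concatenate spectra. No changes are needed.
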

\begin{proof}
Let $G=K_{n,\ldots,n,trn}$ be the complete $(r+1)$-partite graph and $V(G)=\bigcup\limits_{i=1}^{r} V_{i} \cup W$, where $V_1, V_2, \ldots, V_r$ and $W$ are parts of $G$ with sizes $|V_i|=n (1 \le i \le r)$ and $|W| = trn$.
\par
Let $H_1 = G \Big[\bigcup\limits_{i=1}^{r} V_{i} \Big]$ and $H_2 = \Big(\bigcup\limits_{i=1}^{r} G[V_{i}]\Big)  \lor G[W]$. Clearly, $E(H_1) \cap E(H_2) = \varnothing$ and $E(H_1) \cup E(H_2) = E(G)$. Moreover, $H_1$ is isomorphic to the complete $r$-partite graph $K_{n,\ldots, n}$ and $H_2$ is isomorphic to the complete bipartite graph $K_{rn, trn}$.
\par
By Lemma \ref{multipartite}, $H_1$ has an interval $(r-1)n$-coloring $\alpha$ such that for any $v \in V(H_1)$, $S_{H_1}(v,\alpha) = [1, (r-1)n]$. By Lemma \ref{Kn,nmlemma}, $H_2$ has an interval $trn$-coloring $\beta$ such that for any $u \in \bigcup\limits_{i=1}^r V_i$, $S_{H_2}(u, \beta) = [1, trn]$. Now we define an edge-coloring $\gamma$ as follows: for any $e \in E(G)$, let
\begin{equation*}
  \gamma(e)=\begin{cases}
    \alpha(e), & \text{if $e \in E(H_1)$},\\
    \beta(e) + (r-1)n, & \text{if $e \in E(H_2)$}.
  \end{cases}
\end{equation*}

Let us show that $\gamma$ is an interval $(tr + r - 1)n$-coloring.\\

By the definition of $\gamma$, we have

\begin{description}
\item[1)] for any $v \in \bigcup\limits_{i=1}^r V_i$,
\begin{equation*}
S_G(v, \gamma) = S_{H_1}(v, \alpha) \cup S_{H_2}(v, \beta)=[1,(r-1)n] \cup [(r-1)n +1, (r-1)n + trn] = [1,(tr +r-1)n].    
\end{equation*}

\item[2)] for any $v \in W$,
$$S_G(v, \gamma) = S_{H_2}(v, \beta) \oplus (r-1)n = [\underline{S}_{H_2}(v, \beta) + (r-1)n, \overline{S}_{H_2}(v, \beta) + (r-1)n].$$
\end{description}

This shows that $\gamma$ is an interval $(tr +r-1)n$-coloring of $G$; thus $\mathrm{def}(K_{n,\ldots,n,trn}) = 0$. 
\end{proof}

An example of the coloring $\gamma$ from the proof of Theorem \ref{multi_trn} for $t=1, r=2$ and $n=3$ one can find in Fig. 2.

\begin{center}
\begin{tikzpicture}[thick,
  every node/.style={draw,circle},
  colornode/.style={draw},
  fsnode/.style={fill=black},
  ssnode/.style={fill=black},
  every fit/.style={ellipse,draw,inner sep=-2pt,text width=2cm},
  ->,shorten >= 3pt,shorten <= 3pt,
]

\begin{scope}[xshift=1.85cm,start chain = going right,node distance=14mm]
\foreach \i in {1,...,6}
  \node[fsnode,on chain] (w\i) [label=above: $w_\i$] {};
\end{scope}

\begin{scope}[xshift=0cm,yshift=-4cm,start chain=going below,node distance=25mm]
\foreach \i in {1,2,3}
  \node[fsnode,on chain] (u\i) [label=left: $u_\i$] {};
\end{scope}

\begin{scope}[xshift=13cm,yshift=-4cm,start chain=going below,node distance=25mm]
\foreach \i in {1,2,3}
  \node[ssnode,on chain] (v\i) [label=right: $v_\i$] {};
\end{scope}

\begin{scope}[every node/.style={fill=white, inner sep=0pt,minimum size=1pt, circle,text=black}, every edge/.style={draw=black}]
\foreach \i in {1,2,3}
	\foreach \j in {1,2,3}
		\path[-] (v\i) edge node[near start, font=\scriptsize] {$\the\numexpr \intcalcMod{\i+\j }{3}+1\relax $} node[near end, font=\scriptsize] {$\the\numexpr \intcalcMod{\i+\j }{3}+1\relax $} (u\j);
\foreach \i in {1,2,3}
	\foreach \j in {1,2,3}
		\path[-] (w\i) edge node[pos=0.15,xshift=0mm, font=\scriptsize] {$\the\numexpr \intcalcMod{\i+\j }{3}+1 +3\relax $} node[near end,xshift=0mm, font=\scriptsize] {$\the\numexpr \intcalcMod{\i+\j }{3}+1 +3\relax $}(u\j);
\foreach \i in {4,5,6}
	\foreach \j in {1,2,3}
		\path[-] (w\i) edge node[pos=0.15,xshift=0mm, font=\scriptsize] {$\the\numexpr \intcalcMod{\i-3+\j }{3}+1 +6\relax $} node[near end,xshift=0mm, font=\scriptsize] {$\the\numexpr \intcalcMod{\i-3+\j }{3}+1 +6\relax $} (u\j);
\foreach \i in {1,2,3}
	\foreach \j in {1,2,3}
		\path[-] (w\i) edge node[pos=0.15,xshift=0mm, font=\scriptsize] {$\the\numexpr \intcalcMod{\i+\j }{3}+1 +6\relax $} node[near end,xshift=0mm, font=\scriptsize] {$\the\numexpr \intcalcMod{\i+\j }{3}+1 +6\relax $}  (v\j);
\foreach \i in {4,5,6}
	\foreach \j in {1,2,3}
		\path[-] (w\i) edge node[pos=0.15,xshift=0mm, font=\scriptsize] {$\the\numexpr \intcalcMod{\i-3+\j }{3}+1 +3\relax $} node[near end,xshift=0mm, font=\scriptsize] {$\the\numexpr \intcalcMod{\i-3+\j }{3}+1 +3\relax $}(v\j);
		
\end{scope}
\end{tikzpicture}\\
The interval $9$-coloring $\gamma$ of $K_{3,3,6}$.
\end{center}

\begin{theorem}
\label{multi_(tr+1)n}
For any $n,r,t \in \mathbb{N}$, for the complete $(r+1)$-partite graph $K_{n,\ldots,n,(tr+1)n}$, $\mathrm{def}(K_{n,\ldots,n,(tr+1)n}) = 0$
if and only if $n(r+1)$ is even.
\end{theorem}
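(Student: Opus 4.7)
The plan is to prove the two implications separately, with the necessity following from a short divisibility argument based on Theorem \ref{mytheorem1.1} and the sufficiency from a decomposition of $G=K_{n,\ldots,n,(tr+1)n}$ into two interval-colorable pieces handled by Lemmas \ref{multipartite} and \ref{Kn,nmlemma}.

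For the necessity direction, I would observe that every vertex of a small part $V_{i}$ has degree $(r-1)n+(tr+1)n=r(t+1)n$ and every vertex of the large part $W$ has degree $rn$, so $rn$ divides the degree of every vertex of $G$. A direct count gives $|E(G)|=\binom{r}{2}n^{2}+r(tr+1)n^{2}=\frac{rn^{2}(r+1+2tr)}{2}$, hence $|E(G)|/(rn)=n(r+1+2tr)/2$. Because $2tr$ is even, this quotient is an integer if and only if $n(r+1)$ is even. Therefore, if $n(r+1)$ is odd, $rn$ is a common divisor of all vertex-degrees that fails to divide $|E(G)|$, and Theorem \ref{mytheorem1.1} forces $G\notin\mathfrak{N}$, so $\mathrm{def}(G)>0$.

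For sufficiency, assume $n(r+1)$ is even. The idea is to peel off a single layer of $n$ vertices from $W$. Partition $W=W_{1}\cup W_{2}$ with $|W_{1}|=trn$ and $|W_{2}|=n$, and split $E(G)=E(H_{1})\cup E(H_{2})$, where $H_{1}=G[V_{1}\cup\cdots\cup V_{r}\cup W_{2}]$ is isomorphic to the complete $(r+1)$-partite graph $K_{n,\ldots,n}$ with all parts of size $n$, and $H_{2}$ consists of all edges between $V_{1}\cup\cdots\cup V_{r}$ and $W_{1}$, so $H_{2}\cong K_{rn,trn}$. Since $(r+1)n$ is even, Lemma \ref{multipartite} supplies an interval $rn$-coloring $\alpha$ of $H_{1}$ with $S_{H_{1}}(v,\alpha)=[1,rn]$ for every vertex $v$ of $H_{1}$. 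Since $trn=(rn)\cdot t$, Lemma \ref{Kn,nmlemma} provides an interval $trn$-coloring $\beta$ of $H_{2}$ with $S_{H_{2}}(u,\beta)=[1,trn]$ for every $u\in V_{1}\cup\cdots\cup V_{r}$. Defining $\gamma(e)=\alpha(e)$ on $E(H_{1})$ and $\gamma(e)=\beta(e)+rn$ on $E(H_{2})$, every vertex in $V_{1}\cup\cdots\cup V_{r}$ obtains spectrum $[1,rn]\cup[rn+1,(t+1)rn]=[1,(t+1)rn]$, every vertex in $W_{2}$ keeps spectrum $[1,rn]$, and every vertex in $W_{1}$ has the shifted interval $S_{H_{2}}(v,\beta)+rn$. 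Hence $\gamma$ is an interval coloring of $G$ and $\mathrm{def}(G)=0$.

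The main obstacle is spotting this decomposition. A naive analogue of Theorem \ref{multi_trn} that treats all edges between $V_{1}\cup\cdots\cup V_{r}$ and $W$ as one bipartite piece $K_{rn,(tr+1)n}$ fails, because the "full spectrum on one side" property of Lemma \ref{Kn,nmlemma} is only available when one part size divides the other, and $rn\nmid(tr+1)n$ in general. Transferring $n$ vertices from $W$ into the multipartite block $H_{1}$ simultaneously restores divisibility in the bipartite block $H_{2}\cong K_{rn,trn}$ and produces an $(r+1)$-part complete multipartite graph whose interval colorability is governed by exactly the parity hypothesis $n(r+1)$ even; the two blocks then concatenate cleanly because $\alpha$ uses the prefix $[1,rn]$ on the shared vertices while the shift by $rn$ places $\beta$ immediately above it.
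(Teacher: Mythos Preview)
Your proof is correct and follows essentially the same route as the paper: the necessity via Theorem \ref{mytheorem1.1} with $d=rn$ is exactly the paper's argument (the paper phrases it as computing the gcd of the degrees, which comes out to $rn$), and your sufficiency decomposition $H_{1}\cong K_{n,\ldots,n}$ on $V_{1}\cup\cdots\cup V_{r}\cup W_{2}$ together with $H_{2}\cong K_{rn,trn}$ on the edges to $W_{1}$, colored by Lemmas \ref{multipartite} and \ref{Kn,nmlemma} and glued by shifting $\beta$ by $rn$, coincides with the paper's construction (your $W_{2}$ is the paper's $U$ and your $W_{1}$ is the paper's $W$).
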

\begin{proof}
Let $G=K_{n,\ldots,n,(tr+1)n}$ be the complete $(r+1)$-partite graph.
\par
The necessity we prove by contradiction. Assume that there are natural numbers $t,r$ and $n$ such that $\mathrm{def}(G) = 0$ and $n(r+1)$ is odd. First we calculate the number of edges of $G$:
$$|E(G)| = nr(tr+1)n+\frac{r(r-1)n^{2}}{2} = rn^{2} \Big( tr + 1 +  \frac{r-1}{2} \Big).$$
Let $d$ be the greatest common divisor of degrees of vertices of $G$. Clearly for any $v \in V(G)$ either $d_G(v) = nr$ or $d_G(v)=n(r-1) + (rt+1)n$. From here, we obtain
$$d = \gcd(nr, nr + nrt) = 
nr\gcd(1, 1+t) = nr.$$

By Theorem \ref{mytheorem1.1}, we have $|E(G)|\equiv 0 \mod d$, which means the number $n(tr + 1 +  \frac{r-1}{2})$ must be integer, which is a contradiction,  since $n(r+1)$ is odd.
\par 
For the proof of sufficiency, let 
$V(G)=\bigcup\limits_{i=1}^{r} V_{i} \cup U \cup W$, 
where $V_1, V_2, \ldots, V_r$ and $U \cup W$ are parts of $G$ with sizes $|V_i|=n$ $(1 \le i \le r)$, $|U| = n$ and $|W| = trn$.
\par
Let $H_1 = G \Big[\bigcup\limits_{i=1}^{r} V_{i} \cup U\Big]$ and $H_2 = \Big(\bigcup\limits_{i=1}^{r} G[V_{i}]\Big)  \lor G[W]$. Clearly, $E(H_1) \cap E(H_2) = \varnothing$ and $E(H_1) \cup E(H_2) = E(G)$. Moreover, $H_1$ is isomorphic to the complete $(r+1)$-partite graph $K_{n,\ldots, n}$ and $H_2$ is isomorphic to the complete bipartite graph $K_{rn, trn}$.
\par
By Lemma \ref{multipartite}, $H_1$ has an interval $rn$-coloring $\alpha$ such that for any $v \in V(H_1)$ $S_{H_1}(v,\alpha) = [1, rn]$. By Lemma \ref{Kn,nmlemma}, $H_2$ has an interval $trn$-coloring $\beta$ such that for any $u \in \bigcup\limits_{i=1}^r V_i$, $S_{H_2}(u, \beta) = [1, trn]$. Now we define an edge-coloring $\gamma$ as follows: for any $e \in E(G)$, let
\begin{equation*}
  \gamma(e)=\begin{cases}
    \alpha(e), & \text{if $e \in E(H_1)$},\\
    \beta(e) + rn, & \text{if $e \in E(H_2)$}.
  \end{cases}
\end{equation*}

Let us show that $\gamma$ is an interval $(t+1)rn$-coloring. 

By the definition of $\gamma$, we have
\par
\begin{description}
\item[1)] for any $v \in \bigcup\limits_{i=1}^r V_i$,
$$S_G(v, \gamma) = S_{H_1}(v, \alpha) \cup S_{H_2}(v, \beta) 
= [1,rn] \cup [rn +1, rn + trn] = [1,(t+1)rn],$$

\item[2)] for any $v \in W$,
$$S_G(v, \gamma) = S_{H_2}(v, \beta) \oplus rn = [\underline{S}_{H_2}(v, \beta) + rn, \overline{S}_{H_2}(v, \beta) + rn].$$
\end{description}

This shows that $\gamma$ is an interval $(t+1)rn$-coloring of $G$; thus $\mathrm{def}(K_{n,\ldots,n,(tr+1)n}) = 0$.
\end{proof}

An example of the coloring $\gamma$ from the proof of Theorem \ref{multi_(tr+1)n} for $t=1, r=2$ and $n=2$ one can find in Fig. 3.

\begin{center}
\begin{tikzpicture}[thick,
  every node/.style={draw,circle},
  colornode/.style={draw},
  fsnode/.style={fill=black},
  ssnode/.style={fill=black},
  every fit/.style={ellipse,draw,inner sep=-2pt,text width=2cm},
  ->,shorten >= 3pt,shorten <= 3pt,
]

\begin{scope}[xshift=1.8cm,start chain = going right,node distance=14mm]
\foreach \i in {1,...,6}
  \node[fsnode,on chain] (w\i) [label=above: $w_\i$] {};
\end{scope}

\begin{scope}[xshift=0cm,yshift=-4cm,start chain=going below,node distance=33mm]
\foreach \i in {1,2}
  \node[fsnode,on chain] (u\i) [label=left: $u_\i$] {};
\end{scope}

\begin{scope}[xshift=13cm,yshift=-4cm,start chain=going below,node distance=33mm]
\foreach \i in {1,2}
  \node[ssnode,on chain] (v\i) [label=right: $v_\i$] {};
\end{scope}

\begin{scope}[every node/.style={fill=white, inner sep=0pt,minimum size=1pt, circle,text=black}, every edge/.style={draw=black}]

\path[-] (w1) edge node[pos=0.15,xshift=0mm, font=\scriptsize] {$3$} node[near end,xshift=0mm, font=\scriptsize] {$3$}(u1);
\path[-] (w2) edge node[pos=0.15,xshift=0mm, font=\scriptsize] {$1$} node[near end,xshift=0mm, font=\scriptsize] {$1$}(u1);
\path[-] (v1) edge node[pos=0.15,xshift=0mm, font=\scriptsize] {$4$} node[near end,xshift=0mm, font=\scriptsize] {$4$}(u1);
\path[-] (v2) edge node[pos=0.15,xshift=0mm, font=\scriptsize] {$2$} node[near end,xshift=0mm, font=\scriptsize] {$2$}(u1);

\path[-] (w1) edge node[pos=0.15,xshift=0mm, font=\scriptsize] {$4$} node[near end,xshift=0mm, font=\scriptsize] {$4$}(u2);
\path[-] (w2) edge node[pos=0.15,xshift=0mm, font=\scriptsize] {$2$} node[near end,xshift=0mm, font=\scriptsize] {$2$}(u2);
\path[-] (v1) edge node[pos=0.15,xshift=0mm, font=\scriptsize] {$1$} node[near end,xshift=0mm, font=\scriptsize] {$1$}(u2);
\path[-] (v2) edge node[pos=0.15,xshift=0mm, font=\scriptsize] {$3$} node[near end,xshift=0mm, font=\scriptsize] {$3$}(u2);

\path[-] (w1) edge node[pos=0.15,xshift=0mm, font=\scriptsize] {$2$} node[near end,xshift=0mm, font=\scriptsize] {$2$}(v1);
\path[-] (w1) edge node[pos=0.15,xshift=0mm, font=\scriptsize] {$1$} node[near end,xshift=0mm, font=\scriptsize] {$1$}(v2);
\path[-] (w2) edge node[pos=0.15,xshift=0mm, font=\scriptsize] {$3$} node[near end,xshift=0mm, font=\scriptsize] {$3$}(v1);
\path[-] (w2) edge node[pos=0.15,xshift=0mm, font=\scriptsize] {$4$} node[near end,xshift=0mm, font=\scriptsize] {$4$}(v2);

\foreach \i in {3,4}
	\foreach \j in {1,2}
		\path[-] (w\i) edge node[pos=0.15,xshift=0mm, font=\scriptsize] {$\the\numexpr \intcalcMod{\i-2+\j }{2}+1 +4\relax $} node[near end,xshift=0mm, font=\scriptsize] {$\the\numexpr \intcalcMod{\i-2+\j }{2}+1 +4\relax $}(u\j);
\foreach \i in {5,6}
	\foreach \j in {1,2}
		\path[-] (w\i) edge node[pos=0.15,xshift=0mm, font=\scriptsize] {$\the\numexpr \intcalcMod{\i-4+\j }{2}+1 +6\relax $} node[near end,xshift=0mm, font=\scriptsize] {$\the\numexpr \intcalcMod{\i-4+\j }{2}+1 +6\relax $} (u\j);

\foreach \i in {3,4}
	\foreach \j in {1,2}
		\path[-] (w\i) edge node[pos=0.15,xshift=0mm, font=\scriptsize] {$\the\numexpr \intcalcMod{\i-2+\j }{2}+1 +6\relax $} node[near end,xshift=0mm, font=\scriptsize] {$\the\numexpr \intcalcMod{\i-2+\j }{2}+1 +6\relax $}(v\j);
\foreach \i in {5,6}
	\foreach \j in {1,2}
		\path[-] (w\i) edge node[pos=0.15,xshift=0mm, font=\scriptsize] {$\the\numexpr \intcalcMod{\i-4+\j }{2}+1 +4\relax $} node[near end,xshift=0mm, font=\scriptsize] {$\the\numexpr \intcalcMod{\i-4+\j }{2}+1 +4\relax $} (v\j);

\end{scope}
\end{tikzpicture}\\
The interval $8$-coloring $\gamma$ of $K_{2,2,6}$.
\end{center}

Now we consider the deficiency of complete multipartite graphs which are close to balanced complete multipartite graphs. For such graphs we prove lower and upper bounds on the deficiency which are differ from each other by one.  

\begin{theorem}
For any $n,r \in \mathbb{N}$, if $n(r+1)$ is even, then for the complete $(r+1)$-partite graph $K_{n,\ldots,n,n+1}$,
$$\frac{n(r-1)}{2} \le \mathrm{def}(K_{n,\ldots,n,n+1}) \le \frac{n(r-1)}{2}+1.$$
\end{theorem}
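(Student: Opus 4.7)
For the lower bound, I would invoke Lemma~\ref{lbmultipartite} directly. Sort the part sizes as $n_1=\cdots=n_r=n$ and $n_{r+1}=n+1$. Since $n(r+1)$ is even, the total $\sum n_i=(r+1)n+1$ is odd, so the lemma applies. Each term with $i\in\{2,\dots,r\}$ contributes $\tfrac{(n+1)n-n^2}{2}=\tfrac{n}{2}$, while the term for $i=r+1$ vanishes because $(n+1)(n+1)=(n+1)^2$. Summing gives $\tfrac{n(r-1)}{2}$, establishing the lower bound.

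For the upper bound, I would build a proper edge-coloring $\gamma$ of $G=K_{n,\dots,n,n+1}$ with $\mathrm{def}(G,\gamma)\le \tfrac{n(r-1)}{2}+1$ by the same general ``$H_1\cup H_2$'' philosophy used in Theorems~\ref{multi_trn} and~\ref{multi_(tr+1)n}, but tuned to the fact that the large part has one extra vertex. Label $W=\{w_0,w_1,\dots,w_n\}$ and set $G'=G-w_0$, which is isomorphic to the balanced $(r+1)$-partite graph $K_{n,\dots,n}$. Since $n(r+1)$ is even, Lemma~\ref{multipartite} gives $G'$ an interval $rn$-coloring. The naive extension using the uniform-spectrum coloring of Lemma~\ref{multipartite} leaves $w_0$ in conflict with a common interval $[1,rn]$ at every neighbor and yields quadratic deficiency; to avoid this, I would construct a refined interval coloring $\alpha$ of $G'$ in which the vertices $v\in V_1\cup\cdots\cup V_r$ carry interval spectra $[a_v,a_v+rn-1]$ with several distinct starting points $a_v$, arranged so that each value $a_v\geq 2$ occurs at most twice and $a_v=1$ occurs at most once (so the ``extension colors'' $\{a_v-1,a_v+rn\}$ supply $rn$ legitimate distinct choices).

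The coloring $\gamma$ is then obtained by assigning each edge $w_0 v$ a color from $\{a_v-1,a_v+rn\}$, picking the two options whenever $a_v$ occurs twice and picking the one valid option when $a_v=1$. Each $v\in V_1\cup\cdots\cup V_r$ thus acquires an interval spectrum of length $rn+1$, and every $w_j\in W\setminus\{w_0\}$ keeps its interval spectrum inherited from $\alpha$, so the deficiency is concentrated at $w_0$. A balanced choice of ``low/high'' extensions makes $w_0$'s spectrum the disjoint union of a low block near $[1,\,\cdot\,]$ and a high block near $[rn+1,\,\cdot\,]$, and a direct count shows the span of this set equals $rn+\tfrac{n(r-1)}{2}+1$, giving $\mathrm{def}_\gamma(w_0)=\tfrac{n(r-1)}{2}+1$ and total deficiency matching the claimed bound.

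The hardest step is producing the refined interval coloring $\alpha$ of $G'$ with the prescribed multiset of starting points $\{a_v\}$; the uniform coloring furnished by Lemma~\ref{multipartite} is too rigid. I would build $\alpha$ by patching together shifted interval colorings on the bipartite subgraphs $K_{n,n}$ between pairs of parts (using Lemma~\ref{Hayklemma} to control their lower spectral edges) and then recombining the shifts consistently across all $\binom{r+1}{2}$ bipartite blocks. Once $\alpha$ is in hand the extension, the verification that every $v\neq w_0$ has interval spectrum, and the computation of the exact span of $w_0$'s spectrum are all routine bookkeeping.
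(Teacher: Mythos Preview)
Your lower bound is exactly the paper's argument.

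Your upper-bound strategy is also the paper's: delete the extra vertex $w$, take a staggered interval coloring $\alpha$ of the balanced $H=K_{n,\dots,n}$ in which the $rn$ neighbours of $w$ have interval spectra $[a_v,a_v+rn-1]$ with each starting value repeated at most twice, then extend to $w$ by assigning each edge $wv$ either $a_v-1$ or $a_v+rn$ so that all deficiency lands at $w$. The paper in fact shifts $\alpha$ by $+1$ first, so every $a_v\ge 2$ and your special ``$a_v=1$ at most once'' clause becomes unnecessary; with that shift, the paper's multiset of starting points has each value in a block of length $\frac{(r-1)n}{2}$ appearing twice and a further block of length $n$ appearing once, which is what yields the span you assert.

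The genuine gap is the step you yourself flag as hardest. The paper does \emph{not} build $\alpha$ via Lemma~\ref{Hayklemma}; it imports a specific interval $\bigl(\tfrac{(3r+1)n}{2}-1\bigr)$-coloring of $K_{n,\dots,n}$ from \cite{PetArXiv} with explicitly prescribed spectra $S(v_j^{(2i-1)})=S(v_j^{(2i)})=[j+(i-1)m,\,j+(i-1)m+rn-1]$. Crucially, that construction is parity-sensitive: when $r$ is odd one pairs the $r+1$ parts and takes $m=n$; when $r$ is even (forcing $n$ even) one first splits every part into halves of size $n/2$, pairs the $2(r+1)$ halves, and takes $m=n/2$. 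Your proposal treats all parities uniformly and does not explain how to patch the $\binom{r+1}{2}$ bipartite blocks so that the $r$ length-$n$ intervals at each vertex tile a single interval of length $rn$ with the required multiset of starting points; this consistency across blocks is precisely the content of \cite{PetArXiv} and is not a consequence of Lemma~\ref{Hayklemma} alone. Until that construction (or an equivalent one) is supplied, and the two parity cases handled, the upper-bound argument is incomplete.
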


\begin{proof}
Let $G=K_{n,\ldots,n,n+1}$ be the complete $(r+1)$-partite graph.

First we prove the lower bound.  Since $|V(G)| = n(r+1) +1$ is odd, by Lemma \ref{lbmultipartite}, we obtain
$$\mathrm{def}(G) \ge (r - 1) \frac{(n+1)  n - n^2}{2} + \frac{(n+1)(n+1) - (n+1)^2}{2} = \frac{n(r - 1)}{2}.$$
Next we show that $\mathrm{def}(G) \le \frac{n(r - 1)}{2}+1$. We distinguish this part of the proof into two cases.\\
\textbf{Case 1:} $r$ is odd.\\
Let $V(G) = \left(\bigcup\limits_{i=1}^{r+1} V_i \right) \cup \{w\}$, where $V_1, V_2, \ldots, V_r$ and $V_{r+1} \cup \{w\}$ are parts of $G$ with sizes $|V_i|=n$ $(1 \le i \le r)$ and $|V_{r+1} \cup \{w\}| = n+1$. Also, let $V_i=\left\{v_j^{(i)}: 1 \le j \le n\right\}$ $(1 \le i \le r+1)$ and $H=G\left[ \bigcup\limits_{i=1}^{r+1}V_i \right]$.
Clearly, $H$ is isomorphic to the complete $(r+1)$-partite graph $K_{n, \ldots,n}$.
In \cite{PetArXiv}, it was proved that $H$ has an interval 
$\big((3r+1)\frac{n}{2}-1\big)$-coloring $\alpha$ such that for any $1 \le i \le \frac{r+1}{2}$ and $1 \le j \le n$,
\begin{gather*}
S\left(v_j^{(2i - 1)},\alpha\right) = S\left(v_j^{(2i)},\alpha\right) = [j + (i-1)n, j + (i-1)n + rn -1] = \\
=[j + (i - 1)n, j + (r+i-1)n -1].
\end{gather*}

Define an edge-coloring $\beta$ of $G$ as follows: for any $e \in E(G)$, let
\begin{equation*}
  \beta(e)=\begin{cases}
    \alpha(e)+1,      &\text{if } e \in E(H),\\
    (i-1)n+j,           &\text{if } e=wv^{(2i-1)}_j, 1 \le i \le \frac{r+1}{2}, 1 \le j \le n,\\
    (r+i-1)n+j+1,     &\text{if } e=wv^{(2i)}_j,   1 \le i < \frac{r+1}{2},   1 \le j \le n.
  \end{cases}
\end{equation*}
Let us prove that $\beta$ is a proper edge-coloring of $G$ with $\frac{(3r+1)n}{2}$ colors such that $\mathrm{def}(G, \beta) = \mathrm{def}\left(w,\beta\right) = \frac{n(r-1)}{2}+1$. 

By the definition of $\beta$, we have

\begin{description}
\item[1)] for $1 \le i \le \frac{r+1}{2}, 1 \le j \le n,$ 
$$S\left(v_j^{(2i-1)},\beta\right) = [(i-1)n+j, (i+r-1)n+j],$$
\item[2)] for $1 \le i < \frac{r+1}{2}, 1 \le j \le n,$
$$S\left(v_j^{(2i)},\beta\right) = [(i-1)n+j+1, (i+r-1)n+j+1],$$
\item[3)] for $i = \frac{r+1}{2}, 1 \le j \le n,$
$$S\left(v_j^{(2i)},\beta\right) = [(i-1)n+j+1, (i+r-1)n+j],$$
\item[4)] $S\left(w,\beta\right) = 
\bigcup\limits_{i=1}^{\frac{r+1}{2}} \bigcup\limits_{j=1}^{n} \{(i-1)n+j \} \cup 
\bigcup\limits_{i=1}^{\frac{r-1}{2}} \bigcup\limits_{j=1}^{n} \{(r+i-1)n+j+1\} = $
$\bigcup\limits_{i=1}^{\frac{r+1}{2}} [(i-1)n+1,in]\cup\bigcup\limits_{i=1}^{\frac{r-1}{2}} [(r+i-1)n+2,(r+i)n+1]=$
$\left[1, \frac{r+1}{2}n\right]\cup\left[rn+2,\left(r+\frac{r-1}{2}\right)n+1\right].$
\end{description}

This shows that for any $v \in V(G)\setminus \{w\}$, $S(v, \beta)$ is an integer interval. Now we consider the spectrums of the vertex $w$. By $4)$, we obtain
\begin{description}
\item[a)] $|S(w,\beta)| = \frac{r+1}{2}n + (r+ \frac{r-1}{2})n -  rn = rn = d_G(w),$
\item[b)] $\mathrm{def}(w,\beta) = \overline{S}(w,\beta) - \underline{S}(w,\beta) - |S(w,\beta)| + 1 =(r+ \frac{r-1}{2})n - rn  + 1 =  \frac{r-1}{2}n + 1.$
\end{description}

This implies that $\beta$ is a proper edge-coloring of $G$ with $\frac{(3r+1)n}{2}$ colors such that $\mathrm{def}(G, \beta) = \mathrm{def}(w,\beta) = \frac{n(r-1)}{2}+1$.\\

\textbf{Case 2:} $n$ is even.\\

Let $V(G) = \left(\bigcup\limits_{i=1}^{2(r+1)} V_i \right) \cup \{w\}$, where $V_1 \cup V_3$, $V_{2i}\cup V_{2i+3}$ $(1\le i\le r-1)$ and $V_{2r} \cup V_{2(r+1)}\cup \{w\}$ are parts of $G$ with sizes $|V_i|=\frac{n}{2}$ $(1 \le i \le 2(r+1))$. Also, let $V_i=\left\{v_j^{(i)}: 1 \le j \le \frac{n}{2}\right\}$  $(1 \le i \le 2(r+1))$ and
$H=G\left[ \bigcup\limits_{i=1}^{2(r+1)}V_i \right]$.
Clearly, $H$ is isomorphic to the complete $(r+1)$-partite graph $K_{n, \ldots,n}$. In \cite{PetArXiv}, it was proved that $H$ has an interval 
$\left(\frac{(3r+1)n}{2}-1\right)$-coloring $\gamma$ such that for any $1 \le i \le r+1$ and $1 \le j \le \frac{n}{2}$,
$$S\left(v_j^{(2i - 1)},\gamma\right) = S\left(v_j^{(2i)},\gamma\right) =\left[j + (i-1)\frac{n}{2}, j + (2r+i-1)\frac{n}{2}-1\right].$$

Define an edge-coloring $\varphi$ of $G$ as follows: for any $e \in E(G)$, let
\begin{equation*}
  \varphi(e)=\begin{cases}
    \gamma(e) + 1,  &\text{if } e \in E(H),\\
    (i-1)\frac{n}{2}+j,       &\text{if } e=wv^{(2i-1)}_j, 1 \le i \le r+1,   1 \le j \le \frac{n}{2},\\
    (2r+i-1)\frac{n}{2}+j+1,&\text{if } e=wv^{(2i)}_j,   1 \le i \le r-1,   1 \le j \le \frac{n}{2}.
  \end{cases}
\end{equation*}

Let us prove that $\varphi$ is a proper edge-coloring of $G$ with $\frac{(3r+1)n}{2}$ colors such that $\mathrm{def}(G,\varphi) = \mathrm{def}(w,\varphi) = \frac{n(r-1)}{2}+1$. 

By the definition of $\varphi$, we have

\begin{description}
\item[1')] for $1 \le i \le r+1, 1 \le j \le \frac{n}{2},$ 
$$S\left(v_j^{(2i-1)},\varphi\right) = \left[j + (i - 1)\frac{n}{2}, j + (i + 2r- 1)\frac{n}{2}\right],$$
\item[2')] for $1 \le i \le r-1, 1 \le j \le \frac{n}{2},$
$$S\left(v_j^{(2i)},\varphi\right) = \left[j + (i - 1)\frac{n}{2} + 1, j + (i + 2r- 1)\frac{n}{2}+1\right],$$
\item[3')] for $i \in \{ r, r+1 \}, 1 \le j \le \frac{n}{2},$
$$S\left(v_j^{(2i)},\varphi\right) = \left[j + (i - 1) \frac{n}{2}+1, j + (i + 2r- 1) \frac{n}{2}\right],$$
\item[4')] $S(w,\varphi) = 
\bigcup\limits_{i=1}^{r+1} \bigcup\limits_{j=1}^{\frac{n}{2}} \{ j + (i-1)\frac{n}{2} \} \cup 
\bigcup\limits_{i=1}^{r-1} \bigcup\limits_{j=1}^{\frac{n}{2}} \{ j + (2r+i-1)\frac{n}{2} +1\} = $
$\bigcup\limits_{i=1}^{r+1} \left[(i-1)\frac{n}{2}+1,i\frac{n}{2}\right]\cup\bigcup\limits_{i=1}^{r-1} \left[(2r+i-1)\frac{n}{2}+2,(2r+i)\frac{n}{2}+1\right]=$
$\left[1, \frac{n(r+1)}{2}\right]\cup\left[nr+2,\frac{(3r-1)n}{2}+1\right].$
\end{description}

This shows that for any $v \in V(G)\setminus \{w\}$, $S(v, \varphi)$ is an integer interval. Now we consider the spectrums of the vertex $w$. By $4')$, we obtain
\begin{description}
\item[a')] $|S(w,\varphi)| = \frac{n(r+1)}{2} + \frac{n(3r-1)}{2} -  nr = nr = d_G(w),$
\item[b')] $\mathrm{def}(w,\varphi) = \overline{S}(w,\varphi) - \underline{S}(w,\varphi) - |S(w,\varphi)| + 1 =
\frac{n(3r-1)}{2} - nr  + 1 =  \frac{n(r-1)}{2} + 1.$
\end{description}

This implies that $\varphi$ is a proper edge-coloring of $G$ with $\frac{(3r+1)n}{2}$ colors such that $\mathrm{def}(G, \varphi) = \mathrm{def}(w,\varphi) = \frac{n(r-1)}{2}+1$.
\end{proof}

Next, we consider the deficiency of complete $(r+1)$-partite graphs with $r$ parts with the same size $n$ and one part with size $n+2$. For such graphs we prove an upper bound on the deficiency.  

\begin{theorem}
For any $n,r \in \mathbb{N}$, if $n(r+1)$ is even, then for the complete $(r+1)$-partite graph $K_{n,\ldots,n,n+2}$,
$$\mathrm{def}(K_{n,\ldots,n,n+2}) \le nr+2.$$
\end{theorem}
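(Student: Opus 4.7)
The plan is to extend the construction from the previous theorem by attaching a second extra vertex $w_2$ alongside $w_1=w$. Write $V(G)=\left(\bigcup_{i=1}^{r+1}V_i\right)\cup\{w_1,w_2\}$ with $V_1,\ldots,V_r$ and $V_{r+1}\cup\{w_1,w_2\}$ being the parts of $G$, and let $H=G\left[\bigcup_{i=1}^{r+1}V_i\right]\cong K_{n,\ldots,n}$ on $r+1$ parts of size $n$. As in the previous theorem, I would split into Case 1 ($r$ odd) and Case 2 ($n$ even), and in each case start with the interval coloring of $H$ from \cite{PetArXiv} used there, in which each pair of vertices $v_j^{(2i-1)},v_j^{(2i)}$ shares a common spectrum, say $[a,b]$.

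Next, shift the coloring of $H$ up by $1$, so each such shared spectrum becomes $[a+1,b+1]$. The key observation is that adding to such a vertex one edge of color $a$ and one of color $b+2$ extends its spectrum to the interval $[a,b+2]$ without creating any new deficiency at $v$. I would therefore keep, for edges incident to $w_1$, exactly the color formulas that defined the edges of $w$ in the previous theorem: $w_1 v_j^{(2i-1)}$ receives the low color $a$ and $w_1 v_j^{(2i)}$ receives the high color $b+2$. For $w_2$ I would use the complementary assignment: $w_2 v_j^{(2i-1)}$ receives $b+2$ and $w_2 v_j^{(2i)}$ receives $a$. Vertices of $V_{r+1}$ receive no edges from $w_1$ or $w_2$ and retain their shifted interval spectra.

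With this definition every vertex of $V_1\cup\cdots\cup V_{r+1}$ has an interval spectrum, so the entire deficiency is concentrated at $w_1$ and $w_2$. The spectrum of $w_1$ coincides with that of the vertex $w$ in the previous theorem, namely $[1,(r+1)n/2]\cup[rn+2,(3r-1)n/2+1]$, contributing $\mathrm{def}(w_1,\cdot)=(r-1)n/2+1$; the complementary choice gives $w_2$ the spectrum $[1,(r-1)n/2]\cup[rn+2,(3r+1)n/2+1]$, so $\mathrm{def}(w_2,\cdot)=(r+1)n/2+1$. Summing yields the claimed bound $rn+2$. The one technical point that must be checked is propriety at $w_1$ and $w_2$, i.e., that their low and high color blocks are disjoint; this reduces to the elementary inequalities $(r+1)n/2<rn+2$ and $(r-1)n/2<rn+2$, which hold for all $n,r\ge 1$. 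Case 2 is handled by the identical strategy applied to the finer partition of $H$ into $2(r+1)$ blocks of size $n/2$ together with the corresponding coloring of $H$ from \cite{PetArXiv}; the arithmetic has the same shape and again yields total deficiency $rn+2$.
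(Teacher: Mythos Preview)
Your proposal is correct and follows essentially the same approach as the paper: shift the interval coloring of $H\cong K_{n,\ldots,n}$ from \cite{PetArXiv} by one, attach $w_1$ with the same low/high pattern as the single extra vertex $w$ of the preceding theorem, and attach $w_2$ with the complementary pattern, so that every $v_j^{(k)}$ with $k\le r$ receives one color just below and one just above its shifted interval while $V_{r+1}$ is untouched. The resulting spectra $S(w_1)=[1,\tfrac{(r+1)n}{2}]\cup[rn+2,\tfrac{(3r-1)n}{2}+1]$ and $S(w_2)=[1,\tfrac{(r-1)n}{2}]\cup[rn+2,\tfrac{(3r+1)n}{2}+1]$, with total deficiency $rn+2$, coincide with those computed in the paper in both Case~1 ($r$ odd) and Case~2 ($n$ even).
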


\begin{proof}
Let $G=K_{n,\ldots,n,n+2}$ be the complete $(r+1)$-partite graph.
We distinguish our proof into two cases.\\
\textbf{Case 1:} $r$ is odd.\\
Let $V(G) = \left(\bigcup\limits_{i=1}^{r+1} V_i \right) \cup \{w_1, w_2\}$, where $V_1, V_2, \ldots, V_r$ and $V_{r+1} \cup \{w_1, w_2\}$ are parts of $G$ with sizes $|V_i|=n$ $(1 \le i \le r)$ and $|V_{r+1} \cup \{w_1, w_2\}| = n+2$. Also, let $H=G\left[ \bigcup\limits_{i=1}^{r+1}V_i \right]$ and $V_i=\left\{v_j^{(i)}: 1 \le j \le n\right\}$ $(1 \le i \le r+1)$.
Clearly, $H$ is isomorphic to the complete $(r+1)$-partite graph $K_{n, \ldots,n}$.
In \cite{PetArXiv}, it was proved that $H$ has an interval 
$\big(\frac{n(3r+1)}{2}-1\big)$-coloring $\alpha$ such that for any $1 \le i \le \frac{r+1}{2}$ and $1 \le j \le n$,
\begin{gather*}
S \left(v_j^{(2i - 1)},\alpha \right) = S \left(v_j^{(2i)},\alpha \right) = [j + (i-1)n, j + (i-1)n + rn -1] = \\
= [j + (i - 1)n, j + (i+r-1)n -1].
\end{gather*}

Define an edge-coloring $\beta$ of $G$ as follows: for any $e \in E(G)$, let
\begin{equation*}
  \beta(e)=\begin{cases}
    \alpha(e) + 1,      &\text{if } e \in E(H),\\
    j+(i-1)n,           &\text{if } e=w_1v^{(2i-1)}_j, 1 \le i \le \frac{r+1}{2}, 1 \le j \le n,\\
    j+(r+i-1)n + 1,     &\text{if } e=w_1v^{(2i)}_j,   1 \le i < \frac{r+1}{2},   1 \le j \le n,\\
    j+(r+i-1)n + 1,     &\text{if } e=w_2v^{(2i-1)}_j, 1 \le i \le \frac{r+1}{2}, 1 \le j \le n,\\
    j+(i-1)n,           &\text{if } e=w_2v^{(2i)}_j,   1 \le i < \frac{r+1}{2},   1 \le j \le n.
  \end{cases}
\end{equation*}
Let us prove that $\beta$ is a proper edge-coloring of $G$ with $\frac{n(3r+1)}{2}+1$ colors such that $\mathrm{def}(G, \beta) = \mathrm{def}(w_1,\beta)+\mathrm{def}(w_2,\beta) = nr+2$. 

By the definition of $\beta$, we have

\begin{description}
\item[1)] for $1 \le i \le \frac{r+1}{2}, 1 \le j \le n,$ 
$$S \left(v_j^{(2i-1)},\beta \right) = [j + (i - 1)n, j + (i + r- 1)n+1],$$
\item[2)] for $1 \le i < \frac{r+1}{2}, 1 \le j \le n,$
$$S \left(v_j^{(2i)},\beta \right) = [j + (i - 1)n, j + (i + r- 1)n+1],$$
\item[3)] for $i = \frac{r+1}{2}, 1 \le j \le n,$
$$S \left(v_j^{(2i)},\beta \right) = [j + (i - 1) n+1, j + (i + r- 1) n],$$
\item[4)] $S \left(w_1,\beta \right) = 
\bigcup\limits_{i=1}^{\frac{r+1}{2}} \bigcup\limits_{j=1}^{n} \{ j + (i-1)n \} \cup 
\bigcup\limits_{i=1}^{\frac{r-1}{2}} \bigcup\limits_{j=1}^{n} \{ j + (r+i-1)n +1\} = $
$\bigcup\limits_{i=1}^{\frac{r+1}{2}} \left[(i-1)n+1,in\right]
\cup\bigcup\limits_{i=1}^{\frac{r-1}{2}} \left[(r+i-1)n+2,(r+i)n+1\right]=$
$\left[1, \frac{r+1}{2}n\right]\cup\left[2+ rn,1+\left(r+\frac{r-1}{2}\right)n\right],$
\item[5)] $S \left(w_2,\beta \right) = 
\bigcup\limits_{i=1}^{\frac{r-1}{2}} \bigcup\limits_{j=1}^{n} \{ j + (i-1)n \} \cup 
\bigcup\limits_{i=1}^{\frac{r+1}{2}} \bigcup\limits_{j=1}^{n} \{ j + (r+i-1)n +1\} = $
$\bigcup\limits_{i=1}^{\frac{r-1}{2}} \left[(i-1)n+1,in\right]\cup\bigcup\limits_{i=1}^{\frac{r+1}{2}} \left[(r+i-1)n+2,(r+i)n+1\right]=$
$\left[1, \frac{r-1}{2}n\right]\cup\left[2+ rn,1+\left(r+\frac{r+1}{2}\right)n\right].$
\end{description}

This shows that for any $v \in V(G)\setminus \{w_1, w_2\}$, $S(v, \beta)$ is an integer interval. Now we consider the spectrums of the vertices $w_1$ and $w_2$. By $4)$ and $5)$, we obtain
\begin{description}
\item[a)]$|S(w_1,\beta)| = |S(w_2,\beta)| = rn = d_G(w_1) = d_G(w_2),$
\item[b)] $\mathrm{def}(w_1,\beta) = \overline{S}(w_1,\beta) - \underline{S}(w_1,\beta) - |S(w_1,\beta)| + 1 = 
(r+ \frac{r-1}{2})n - rn  + 1 =  \frac{r-1}{2}n + 1,$
\item[c)] $\mathrm{def}(w_2,\beta) = \overline{S}(w_2,\beta) - \underline{S}(w_2,\beta) - |S(w_2,\beta)| + 1 = 
(r+ \frac{r+1}{2})n - rn  + 1 =  \frac{r+1}{2}n + 1.$
\end{description}

This implies that $\beta$ is a proper edge-coloring of $G$ with $\frac{n(3r+1)}{2}+1$ colors such that $\mathrm{def}(G, \beta) = \mathrm{def}(w_1,\beta)+\mathrm{def}(w_2,\beta) = nr+2$.\\

\textbf{Case 2:} $n$ is even.\\

Let $V(G) = \left(\bigcup\limits_{i=1}^{2(r+1)} V_i \right) \cup \{w_1,w_2\}$, where $V_1 \cup V_3$, $V_{2i}\cup V_{2i+3}$ $(1\le i\le r-1)$ and $V_{2r} \cup V_{2(r+1)}\cup \{w_1, w_2\}$ are parts of $G$ with sizes $|V_i|=\frac{n}{2}$ $(1 \le i \le 2(r+1))$. Also, let $H=G\left[ \bigcup\limits_{i=1}^{2(r+1)}V_i \right]$ and $V_i=\left\{v_j^{(i)}: 1 \le j \le \frac{n}{2}\right\}$ $(1 \le i \le 2(r+1))$.
Clearly, $H$ is isomorphic to the complete $(r+1)$-partite graph $K_{n, \ldots,n}$.
In \cite{PetArXiv}, it was proved that $H$ has an interval 
$\big( \frac{n(3r+1)}{2}-1\big)$-coloring $\gamma$ such that for any $1 \le i \le r+1$ and $1 \le j \le \frac{n}{2}$,
$$S\left(v_j^{(2i - 1)},\gamma \right) = S\left(v_j^{(2i)},\gamma\right) =\left[j + (i-1)\frac{n}{2}, j + (2r+i-1)\frac{n}{2}-1\right].$$

Define an edge-coloring $\varphi$ of $G$ as follows: for any $e \in E(G)$, let
\begin{equation*}
  \varphi(e)=\begin{cases}
    \gamma(e) + 1,  &\text{if } e \in E(H),\\
    j+(i-1)\frac{n}{2},       &\text{if } e=w_1v^{(2i-1)}_j, 1 \le i \le r+1,   1 \le j \le \frac{n}{2},\\
    j+(2r+i-1)\frac{n}{2} + 1,&\text{if } e=w_1v^{(2i)}_j,   1 \le i \le r-1,   1 \le j \le \frac{n}{2},\\
    j+(2r+i-1)\frac{n}{2} + 1,&\text{if } e=w_2v^{(2i-1)}_j, 1 \le i \le r+1,   1 \le j \le \frac{n}{2},\\
    j+(i-1)\frac{n}{2},       &\text{if } e=w_2v^{(2i)}_j,   1 \le i \le r-1,   1 \le j \le \frac{n}{2}.
  \end{cases}
\end{equation*}
Let us prove that $\varphi$ is a proper edge-coloring of $G$ with $\frac{n(3r+1)}{2}+1$ colors such that $\mathrm{def}(G, \varphi) = \mathrm{def}(w_1,\varphi)+\mathrm{def}(w_2,\varphi) = nr+2$. 

By the definition of $\varphi$, we have

\begin{description}
\item[1')] for $1 \le i \le r+1, 1 \le j \le \frac{n}{2},$ 
$$S\left(v_j^{(2i-1)},\varphi\right) = 
\left[j + (i - 1)\frac{n}{2}, j + (i + 2r- 1)\frac{n}{2}+1\right],$$
\item[2')] for $1 \le i \le r-1, 1 \le j \le \frac{n}{2},$
$$S\left(v_j^{(2i)},\varphi\right) = 
\left[j + (i - 1)\frac{n}{2}, j + (i + 2r- 1)\frac{n}{2}+1\right],$$
\item[3')] for $i \in \{ r, r+1 \}, 1 \le j \le \frac{n}{2},$
$$S\left(v_j^{(2i)},\varphi\right) = 
\left[j + (i - 1) \frac{n}{2}+1, j + (i + 2r- 1) \frac{n}{2}\right],$$
\item[4')] $S\left(w_1,\varphi\right) = \bigcup\limits_{i=1}^{r+1} \bigcup\limits_{j=1}^{\frac{n}{2}} \{ j + (i-1)\frac{n}{2} \} \cup \bigcup\limits_{i=1}^{r-1} \bigcup\limits_{j=1}^{\frac{n}{2}} \{ j + (2r+i-1)\frac{n}{2} +1\}=$
$\bigcup\limits_{i=1}^{r+1}\left[(i-1)\frac{n}{2}+1,i\frac{n}{2}\right]\cup
\bigcup\limits_{i=1}^{r-1}\left[(2r+i-1)\frac{n}{2}+2,(2r+i)\frac{n}{2}+1\right]=$
$\left[1, (r+1)\frac{n}{2}\right]\cup\left[2+ 2r\frac{n}{2},1+(3r-1)\frac{n}{2}\right],$
\item[5')] $S\left(w_2,\varphi\right) = \bigcup\limits_{i=1}^{r-1} \bigcup\limits_{j=1}^{\frac{n}{2}} \{ j + (i-1)\frac{n}{2} \} \cup 
\bigcup\limits_{i=1}^{r+1} \bigcup\limits_{j=1}^{\frac{n}{2}} \{ j + (2r+i-1)\frac{n}{2} +1\} =$
$\bigcup\limits_{i=1}^{r-1} \left[(i-1)\frac{n}{2}+1,i\frac{n}{2}\right] \cup\bigcup\limits_{i=1}^{r+1} \left[(2r+i-1)\frac{n}{2}+2,(2r+i)\frac{n}{2}+1\right]=$
$\left[1, \frac{n(r-1)}{2}\right]\cup\left[nr+2,\frac{n(3r+1)}{2}+1\right].$
\end{description}

This shows that for any $v \in V(G)\setminus \{w_1,w_2\}$, $S(v, \varphi)$ is an integer interval. Now we consider the spectrums of the vertices $w_1$ and $w_2$. By $4')$ and $5')$, we obtain

\begin{description}
\item[a')]$|S(w_1,\varphi)|= |S(w_2,\varphi)|= rn =d_G(w_1) =d_G(w_2),$
\item[b')] $\mathrm{def}(w_1,\varphi) = \overline{S}(w_1,\varphi) - \underline{S}(w_1,\varphi) - |S(w_1,\varphi)| + 1 = 
\frac{n(3r-1)}{2} - nr  + 1 =  \frac{n(r-1)}{2} + 1,$
\item[c')] $\mathrm{def}(w_2,\varphi) = \overline{S}(w_2,\varphi) - \underline{S}(w_2,\varphi) - |S(w_2,\varphi)| + 1 = 
\frac{n(3r+1)}{2} - nr  + 1 =  \frac{n(r+1)}{2} + 1.$
\end{description}
This implies that $\varphi$ is a proper edge-coloring of $G$ with $\frac{n(3r+1)}{2}+1$ colors such that $\mathrm{def}(G, \varphi) = \mathrm{def}(w_1,\varphi)+\mathrm{def}(w_2,\varphi) = nr+2$.
\end{proof}

Now we consider the deficiency of complete $(r+1)$-partite ($r\geq 3$) graphs with $r-1$ parts with the same size $n$ and two parts with the same size $n+1$. For such graphs we prove an upper bound on the deficiency.  

\begin{theorem}
For any $n,r \in \mathbb{N}$, if $r$ is odd, then for the complete $(r+1)$-partite graph $K_{n,\ldots,n,n+1, n+1}$,
$$\mathrm{def}(K_{n,\ldots,n,n+1,n+1}) \le n(r-1).$$
\end{theorem}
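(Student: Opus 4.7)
The plan is to adapt the two-extra-vertex construction used for $K_{n,\ldots,n,n+2}$ in the previous theorem, but now with $w_1$ and $w_2$ in \emph{different} parts so that the edge $w_1w_2$ becomes available to trim one unit off each of the two spectrum gaps that appeared in the single-extra case. Since $r$ is odd, $r+1$ is even, hence by the construction from \cite{PetArXiv} the subgraph $H=K_{n,\ldots,n}$ ($r+1$ parts of size $n$) admits an interval $\left(\frac{(3r+1)n}{2}-1\right)$-coloring $\alpha$ with $S\left(v_j^{(2i-1)},\alpha\right)=S\left(v_j^{(2i)},\alpha\right)=[j+(i-1)n,\,j+(i+r-1)n-1]$ for $1\le i\le(r+1)/2$ and $1\le j\le n$.

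I would label the parts of $G$ so that the two size-$(n+1)$ parts are $V_r\cup\{w_2\}$ and $V_{r+1}\cup\{w_1\}$, and the remaining parts $V_1,\ldots,V_{r-1}$ have size $n$; under the pairing $(V_1,V_2),\ldots,(V_r,V_{r+1})$ of $\alpha$, this places $w_1$ in the even slot and $w_2$ in the odd slot of the last pair. The coloring $\beta$ on $G$ is then defined by $\beta(e)=\alpha(e)+1$ on $E(H)$ together with (i) $\beta(w_1v^{(2i-1)}_j)=(i-1)n+j$ for $1\le i\le(r+1)/2$ and $\beta(w_1v^{(2i)}_j)=(r+i-1)n+j+1$ for $1\le i\le(r-1)/2$, which is exactly the recipe used in the $K_{n,\ldots,n,n+1}$ proof; (ii) the low/high-swapped version for $w_2$: $\beta(w_2v^{(2i-1)}_j)=(r+i-1)n+j+1$ for $1\le i\le(r-1)/2$ and $\beta(w_2v^{(2i)}_j)=(i-1)n+j$ for $1\le i\le(r+1)/2$; and (iii) $\beta(w_1w_2)=\frac{(r+1)n}{2}+1$, any element of the common gap $\bigl[\frac{(r+1)n}{2}+1,\,rn+1\bigr]$ of $w_1$ and $w_2$ (nonempty whenever $r\ge 1$).

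The verification would mirror the earlier theorems. At a vertex $v^{(i)}_j$ with $i\notin\{r,r+1\}$ both $w_1$ and $w_2$ are adjacent and contribute colors that extend the shifted $H$-spectrum of $v^{(i)}_j$ by one on each side (one just below its minimum, one just above its maximum, with the assignment depending on the parity of the slot), keeping the spectrum an integer interval; at $v^{(r)}_j$ and $v^{(r+1)}_j$ only one of $w_1,w_2$ is adjacent, so the spectrum is extended on exactly one side, still an interval. Thus every vertex of $V(G)\setminus\{w_1,w_2\}$ has deficiency $0$, while $S(w_i,\beta)=\left[1,\tfrac{(r+1)n}{2}\right]\cup\{\beta(w_1w_2)\}\cup\left[rn+2,\tfrac{(3r-1)n}{2}+1\right]$ has size $rn+1=d_G(w_i)$, giving $\mathrm{def}(w_i,\beta)=\tfrac{(3r-1)n}{2}+1-1-(rn+1)+1=\tfrac{n(r-1)}{2}$ for $i=1,2$, and therefore $\mathrm{def}(G)\le\mathrm{def}(G,\beta)=n(r-1)$.

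The main obstacle is the choice of where to place $w_1$ and $w_2$ inside the paired structure of the $H$-coloring. Putting them in two \emph{different} pairs would force a much larger gap at each $w_i$ (of order $\tfrac{(r+1)n}{2}$) and would miss the claimed bound; putting them in the two slots of the \emph{same} last pair simultaneously guarantees that the low and high prescriptions at every common neighbor $v^{(i)}_j$ are disjoint, and that the gaps of $w_1$ and $w_2$ coincide, so that a single color of the new edge $w_1w_2$ falls in both gaps and reduces each individual deficiency from $\tfrac{n(r-1)}{2}+1$ (the value familiar from the $K_{n,\ldots,n,n+1}$ theorem) down to $\tfrac{n(r-1)}{2}$.
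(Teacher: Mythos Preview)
Your proposal is correct and follows essentially the same construction as the paper: shift the interval $\bigl(\tfrac{(3r+1)n}{2}-1\bigr)$-coloring of $K_{n,\ldots,n}$ by one, attach $w_1,w_2$ to the two slots of the last pair with the ``low on one parity, high on the other'' rule, and color $w_1w_2$ inside the common gap. The only cosmetic differences are that the paper swaps your roles of $w_1$ and $w_2$ (it puts $w_1$ in the $V_r$-part and $w_2$ in the $V_{r+1}$-part) and colors $w_1w_2$ with $nr+1$, the top of the gap, rather than $\tfrac{(r+1)n}{2}+1$; both choices yield the same spectra $\left[1,\tfrac{(r+1)n}{2}\right]\cup\left[nr+1,\tfrac{(3r-1)n}{2}+1\right]$ (up to which endpoint is absorbed) and hence $\mathrm{def}(w_i,\beta)=\tfrac{n(r-1)}{2}$.
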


\begin{proof}
Let $G=K_{n,\ldots,n,n+1,n+1}$ be the complete $(r+1)$-partite graph and $V(G) = \left(\bigcup\limits_{i=1}^{r+1} V_i \right) \cup \{w_1, w_2\}$, where $V_1, V_2, \ldots, V_{r-1}$ and $V_{r} \cup \{w_1\}$, $V_{r+1} \cup \{w_2\}$ are parts of $G$ with sizes $|V_i|=n$ $(1 \le i \le r-1)$ and $|V_{r} \cup \{w_1\}| = |V_{r+1} \cup \{w_2\}| = n+1$. Also, let $H=G\left[ \bigcup\limits_{i=1}^{r+1}V_i \right]$ and $V_i=\left\{v_j^{(i)}: 1 \le j \le n\right\}$ $(1 \le i \le r+1)$.
Clearly, $H$ is isomorphic to the complete $(r+1)$-partite graph $K_{n, \ldots,n}$.
In \cite{PetArXiv}, it was proved that $H$ has an interval 
$\big(\frac{n(3r+1)}{2}-1\big)$-coloring $\alpha$ such that for any $1 \le i \le \frac{r+1}{2}$ and $1 \le j \le n$,
\begin{gather*}
S \left(v_j^{(2i - 1)},\alpha \right) = S \left(v_j^{(2i)},\alpha \right) = [j + (i-1)n, j + (i-1)n + rn -1] = \\
= [j + (i - 1)n, j + (i+r-1)n -1].
\end{gather*}
Define an edge-coloring $\beta$ of $G$ as follows: for any $e \in E(G)$, let
\begin{equation*}
  \beta(e)=\begin{cases}
    \alpha(e) + 1,      &\text{if } e \in E(H),\\
    j+(r+i-1)n + 1,     &\text{if } e=w_1v^{(2i-1)}_j, 1 \le i < \frac{r+1}{2}, 1 \le j \le n,\\
    j+(i-1)n,           &\text{if } e=w_1v^{(2i)}_j,   1 \le i \le \frac{r+1}{2},   1 \le j \le n,\\
    j+(i-1)n,           &\text{if } e=w_2v^{(2i-1)}_j, 1 \le i \le \frac{r+1}{2}, 1 \le j \le n,\\
    j+(r+i-1)n + 1,     &\text{if } e=w_2v^{(2i)}_j,   1 \le i < \frac{r+1}{2},   1 \le j \le n,\\
    nr + 1,             &\text{if } e=w_1w_2.
  \end{cases}
\end{equation*}
Let us prove that $\beta$ is a proper edge-coloring of $G$ with $\frac{n(3r+1)}{2}$ colors such that $\mathrm{def}(G, \beta) = \mathrm{def}(w_1,\beta)+\mathrm{def}(w_2,\beta) = n(r-1)$. 

By the definition of $\beta$, we have

\begin{description}
\item[1)] for $1 \le i < \frac{r+1}{2}, 1 \le j \le n,$ 
$$S \left(v_j^{(2i-1)},\beta \right) = S \left(v_j^{(2i)},\beta \right) = 
[j + (i - 1)n, j + (i + r- 1)n+1],$$
\item[2)] for $i = \frac{r+1}{2}, 1 \le j \le n,$
$$S \left(v_j^{(2i-1)},\beta \right) = S \left(v_j^{(2i)},\beta \right) = 
[j + (i - 1) n, j + (i + r- 1) n],$$
\item[3)] $S \left(w_1,\beta \right) = S \left(w_2,\beta \right) = 
\bigcup\limits_{i=1}^{\frac{r+1}{2}} \bigcup\limits_{j=1}^{n} \{ j + (i-1)n \} \cup 
\bigcup\limits_{i=1}^{\frac{r-1}{2}} \bigcup\limits_{j=1}^{n} \{ j + (r+i-1)n +1\} \cup \{nr+1\}= $\\
$=\bigcup\limits_{i=1}^{\frac{r+1}{2}} \left[(i-1)n+1,in\right]\cup
\bigcup\limits_{i=1}^{\frac{r-1}{2}} \left[(r+i-1)n+2,(r+i)n+1\right]\cup\{nr+1\}=$\\
$=\left[1, \frac{r+1}{2}n\right]\cup\left[nr+1,(r+\frac{r-1}{2})n+1\right].$
\end{description}

This shows that for any $v \in V(G)\setminus \{w_1, w_2\}$, $S(v, \beta)$ is an integer interval. Now we consider the spectrums of the vertices $w_1$ and $w_2$. By $3)$, we obtain
\begin{description}
\item[a)]$|S(w_1,\beta)| = |S(w_2,\beta)| = nr+1 = d_G(w_1) = d_G(w_2),$
\item[b)] $\mathrm{def}(w_1,\beta) = \overline{S}(w_1,\beta) - \underline{S}(w_1,\beta) - |S(w_1,\beta)| + 1 = 
(r+ \frac{r-1}{2})n - rn =  \frac{r-1}{2}n,$
\item[c)] $\mathrm{def}(w_2,\beta) = \overline{S}(w_2,\beta) - \underline{S}(w_2,\beta) - |S(w_2,\beta)| + 1 = 
(r+ \frac{r-1}{2})n - rn =  \frac{r-1}{2}n.$
\end{description}

This implies that $\beta$ is a proper edge-coloring of $G$ with $\frac{n(3r+1)}{2}$ colors such that $\mathrm{def}(G, \beta) = \mathrm{def}(w_1,\beta)+\mathrm{def}(w_2,\beta) = n(r-1)$.
\end{proof}

Finally we consider the deficiency of some class of complete $4$-partite graphs. Here we prove an upper bound on the deficiency.  

\begin{theorem}
For any $l,m,n \in \mathbb{N}$,
$$\mathrm{def}(K_{l,m,n,l+m+n}) \le \min\{l^2,m^2,n^2\}.$$
\end{theorem}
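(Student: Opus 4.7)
The plan is to decompose $G=K_{l,m,n,l+m+n}$ as the edge-disjoint union of a tripartite subgraph $H_1\cong K_{l,m,n}$ induced on the three small parts $A,B,C$ (of sizes $l,m,n$) and a balanced bipartite subgraph $H_2\cong K_{l+m+n,\,l+m+n}$ between $V(H_1)$ and the large part $W$, and then to stitch compatible edge-colorings of $H_1$ and $H_2$ together. By the symmetry of $G$ in its three small parts, I may assume $l\le m\le n$ and prove $\mathrm{def}(G)\le l^2$.

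First, I would color $H_1$ using the construction from the proof of Theorem \ref{general_bound} with the ordering $V_1=C$, $V_2=A$, $V_3=B$ (valid since $n\ge m\ge l$). Labeling the vertices $v_1,\dots,v_{l+m+n}$ consecutively through $C,A,B$, the rule $\alpha(v_iv_j)=i+j-n-1$ gives interval spectra at every vertex of $C\cup B$, while each $v_i\in A$ receives spectrum $[i-n,i-1]\cup[i+l,i+l+m-1]$ with a single gap of exactly $l$ colors. Hence $\mathrm{def}(H_1,\alpha)=l^2$, entirely carried by $A$.

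Next, I would apply Lemma \ref{Hayklemma} to $H_2$ with a continuous sequence $L$ chosen so that the $\beta$-spectrum at each $v_i\in A\cup B\cup C$ starts one above the largest $\alpha$-color at $v_i$, namely $\ell_{v_i}=i+l+m$ if $v_i\in C\cup A$ and $\ell_{v_i}=i+l$ if $v_i\in B$. A short verification shows that the sorted multiset of these $l+m+n$ values is continuous: its distinct entries exhaust the interval $[l+m+1,\,2l+m+n]$, while the extra repetitions are accounted for by the overlap of $B$'s range $[2l+n+1,2l+m+n]$ with the ranges of $C$ and of $A$ inside the union, overlaps dictated entirely by $l\le m\le n$. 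Lemma \ref{Hayklemma} then yields a proper edge-coloring $\beta$ of $H_2$ with interval spectra on both sides and $LSE(A\cup B\cup C,\beta)=L$; since we are free to permute the labels of the $U$-side inside $H_2$, each $v_i$ can be arranged to receive exactly starting color $\ell_{v_i}$.

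Finally, $\gamma:=\alpha\cup\beta$ is a well-defined proper edge-coloring of $G$: at each $v\in A\cup B\cup C$ the $\alpha$- and $\beta$-color sets are disjoint by the very choice of $\ell_v$. Every $w\in W$ has an interval spectrum (deficiency $0$); every vertex of $C$ or $B$ concatenates its $\alpha$-interval to the immediately consecutive $\beta$-interval, so its spectrum remains an interval; and every $v_i\in A$ retains exactly its original gap $[i,i+l-1]$ of width $l$. Summing yields $\mathrm{def}(G,\gamma)=l\cdot l=l^2$, which gives $\mathrm{def}(G)\le\min\{l^2,m^2,n^2\}$. The main technical obstacle I expect is the bookkeeping for $L$: verifying continuity and the correct length, and confirming that Lemma \ref{Hayklemma} admits sequences with repetitions and can be arranged so that each prescribed starting color lands on the intended vertex on the $A\cup B\cup C$ side.
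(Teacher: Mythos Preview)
Your proposal is correct and follows essentially the same route as the paper: the same edge-disjoint decomposition $G=H_1\cup H_2$ with $H_1\cong K_{l,m,n}$ and $H_2\cong K_{l+m+n,l+m+n}$, the same coloring of $H_1$ (the explicit formulas the paper writes down are precisely the Theorem~\ref{general_bound} construction you invoke with parts ordered $(n,l,m)$, yielding deficiency $l^2$ concentrated on the size-$l$ part), and the same appeal to Lemma~\ref{Hayklemma} applied to the continuous sequence of upper spectral edges shifted by one. The technical concern you flag---that Lemma~\ref{Hayklemma} only prescribes the sorted multiset $LSE$ and one must permute one side of $K_{l+m+n,l+m+n}$ to align each starting color with its intended vertex---is exactly right and is handled by the automorphism argument you give; the paper leaves this step implicit.
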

\begin{proof}
Without loss of generality we may assume that $l \le m \le n.$\\
Let $V(G) = W \cup U \cup V \cup T$, where $W = \{ w_i : 1 \le i \le l\}, U = \{ u_i : 1 \le i \le m\}$, $V = \{ v_i : 1 \le i \le n\}$ and $T= \{ t_i : 1 \le i \le l+m+n\}$. Also, let $H_1 = G[W \cup U \cup V]$ and $H_2=(G[W]\cup G[U]\cup G[V]) \lor G[T]$. Clearly, $H_1$ is isomorphic to the graph $K_{l,m,n}$ and $H_2$ is isomorphic to the graph $K_{l+m+n, l+m+n}$.\\

We define an edge-coloring $\alpha$ of $H_1$ as follows:
\begin{description}
\item[1)] for $1\le i\le m$, $1\le j\le n$, let $\alpha(u_iv_j) = l+i+j-1$;
\item[2)] for $1\le p\le l$, $1\le j\le n$, let $\alpha(w_pv_j) = p+j-1$;
\item[3)] for $1\le i\le m$, $1\le p\le l$, let $\alpha(u_iw_p) = l+n+i+p-1$.
\end{description}

By the definition of $\alpha$, we have
\begin{description}
\item[a)]for $1\le j\le n$, $S(v_j, \alpha) = 
\bigcup\limits_{i=1}^m \{l+i+j-1\} \cup \bigcup\limits_{p=1}^l \{p+j-1\}=$\\
$[j+l, j+l + m -1] \cup [j,j+l-1] = [j, j+ l + m - 1]$,
\item[b)] for $1\le i\le m $, $S(u_i,\alpha) = 
\bigcup\limits_{j=1}^ n \{l+i+j-1\} \cup \bigcup\limits_{p=1}^l \{l+n+i+p-1\}  = $\\
$[i+l, i+l+n-1] \cup [i+l+n, i+2l+n-1] = [i+l, i+2l+n-1]$,
\item[c)]for $1\le p\le l$, $S(w_p,\alpha) = 
\bigcup\limits_{j=1}^n \{p+j-1\} \cup \bigcup\limits_{i=1} ^ m \{l+n+i+p-1\}$=\\
$[p, p+n-1] \cup [p+l+n, p+l+n+m-1]$.
\end{description}
It is easy to see that $\alpha$ is a proper edge-coloring and for any $z \in U \cup V$, $\mathrm{def}(z, \alpha)=0$, and for any $w \in W$, $\mathrm{def}(w, \alpha)=l$. This implies that
$$\mathrm{def}(H_{1})\le \mathrm{def}(H_{1},\alpha)= \sum_{p=1}^{l} \mathrm{def}(w_p, \alpha) = l^{2}.$$

Let $L_1=USE(V, \alpha)$, $L_2=USE(U, \alpha)$, $L_3=USE(W, \alpha)$ and $Q_1=USE(W \cup U \cup V, \alpha)$. By the definition of $\alpha$, it follows that 
\begin{description}
\item[1')] $L_1 =(l+m,l+m+1,\ldots, l+m+n-1),$
\item[2')] $L_2 =(2l+n,2l+n+1,\ldots, 2l+n+m-1),$
\item[3')] $L_3 =(l+m+n,l+m+n+1,\ldots, 2l+m+n-1).$
\end{description}

From here, we obtain
\begin{description}
\item[a')] $\overline{L_1} \le \overline{L_3} = \overline{L_2}$
\item[b')] $\underline{L_2} \le \underline{L_3}$
\item[c')] $\underline{L_2}-1 \le \overline{L_1}$
\item[d')] $\underline{L_1} \le \underline{L_2}$
\end{description}

This shows that $Q_1=(q_1, q_2, \ldots, q_{l+m+n})$ is a continuous sequence, moreover $\underline{Q_1} = \underline{L_1}$ and $\overline{Q_1}=\overline{L_2}$. Let $Q_2=(q_1+1, q_2+1, \ldots, q_{l+m+n}+1)$. By Lemma \ref{Hayklemma}, we obtain that $H_2$ has a proper edge-coloring $\beta$ such that for any $z \in V(H_2)$, $S(z, \beta)$ is an integer interval and $LSE(T, \beta)=LSE(W \cup U \cup V, \beta)=Q_2$.\\
Now we are able to define an edge-coloring $\gamma$ of $G$.
For any $e \in E(G)$, let
\begin{equation*}
  \gamma(e)=\begin{cases}
    \alpha(e), & \text{if } e \in E(H_1),\\
    \beta(e), & \text{if } e \in E(H_2).
  \end{cases}
\end{equation*}
By the definition of $\gamma$, we have that for any $t \in T$, $S(t, \gamma) = S(t, \beta)$, hence $\mathrm{def}(t, \gamma)=0$. Next by the definitions of $Q_1, Q_2$ and $\alpha, \beta$, we obtain that for any $z \in W \cup U \cup V,$ $\overline{S}(z, \alpha)+1= \underline{S}(z, \beta)$, and hence $\mathrm{def}(z, \gamma)=\mathrm{def}(z, \alpha)$, since $\mathrm{def}(z, \beta)=0$ for any $z \in W \cup U \cup V$. This implies that
$$\mathrm{def}(G) \le \mathrm{def}(G,\gamma) = \sum_{v \in V(G)} \mathrm{def}(v, \gamma) = 
\sum_{v \in W \cup U \cup V} \mathrm{def}(v, \gamma) = \sum_{v \in W \cup U \cup V} \mathrm{def}(v, \alpha) \le \min\{l^2,m^2,n^2\}.$$

\end{proof}
\bigskip

\end{document}